\newtheorem{lemma}{Lemma}[section]
\newtheorem{definition}{Definition}[section]
\newtheorem{theorem}{Theorem}[section]
\newtheorem{corollary}{Corollary}[section]
\newtheorem{proposition}{Proposition}[section]
\newtheorem{remark}{Remark}[section]
\newtheorem{example}{Example}[section]
\begin{document}
\title{\Large{\sf{\sf{A Tensor Analogy of Yuan's Theorem of the Alternative and Polynomial Optimization with Sign structure \thanks{Corresponding author: Guoyin Li, Department of Applied Mathematics, University of New South Wales,
Sydney 2052, Australia. (Email: g.li@unsw.edu.au)}}}}}
 \author{Shenglong Hu \footnote{Department of Mathematics, School of Science, Tianjin University,
Tianjin, China.  E-mail:  timhu@tju.edu.cn; tim.hu@connect.polyu.hk (S. Hu)} \quad Guoyin Li \footnote{Department of Applied Mathematics, University of New South Wales,
Sydney 2052, Australia. E-mail: g.li@unsw.edu.au (G. Li)} \quad Liqun Qi\footnote{Department of Applied
 Mathematics, The Hong Kong Polytechnic University, Hung Hom, Kowloon, Hong
 Kong.
 E-mail: maqilq@polyu.edu.hk (L. Qi)
}}

\date{}
\maketitle

\begin{abstract}
\noindent Yuan's theorem of the alternative is an important theoretical tool in optimization, which provides a checkable certificate for the infeasibility of a strict inequality system involving two homogeneous quadratic functions. In this paper, we provide a tractable extension of Yuan's theorem of the alternative to the symmetric tensor setting. As an application, we establish that the optimal value of a class of nonconvex polynomial optimization problems with suitable sign structure (or more explicitly, with essentially non-positive coefficients) can be computed by a related convex conic programming problem, and the optimal solution of these nonconvex polynomial optimization problems can be recovered from the corresponding solution of the convex conic programming problem. Moreover, we obtain that this class of nonconvex polynomial optimization problems enjoy exact sum-of-squares relaxation, and so, can be solved via a single semidefinite programming problem.

\bigskip

\noindent
{\bf Keywords:} Alternative theorem, symmetric tensors, nonconvex polynomial optimization, sum-of-squares relaxation, semidefinite programming.

\bigskip

\noindent
{\bf AMS Classification:} 90C26, 90C22, 15A69
\end{abstract}

\section{Introduction}
Alternative theorems for arbitrary finite systems of linear or convex inequalities
have played key roles in the development of optimality conditions for continuous optimization
problems. Although these theorems are generally not valid for an
arbitrary finite system of (possibly nonconvex) quadratic inequalities, recent research has established alternative
theorems for quadratic systems involving two inequalities or arbitrary inequalities involving suitable sign structure. For
instance, a theorem of the alternative of Gordan type for a strict inequality system of two
homogeneous quadratic functions has been given in [1]. This theorem is often referred as Yuan's theorem of the alternative.
This theorem provides a checkable certificate for the infeasibility of a strict inequality system involving two homogeneous quadratic functions, and plays an important role
in the convergence analysis of the trust region method. Recently, it was also established in [2] that this theorem of the alternative is equivalent to another popular
result called S-lemma, which is an important tool in quadratic optimization, optimal control and robust optimization [3-6]. 

Because of the importance of this theorem of the alternative, researchers
have attempted to extend it to systems with more than two quadratic
functions. In particular, [7,8] showed that, under a
positive definite regularity condition, Yuan's theorem of the alternative
continues to hold for an inequality system with three homogeneous quadratic
functions. They also provided examples illustrating that, in general,
the regularity condition cannot be dropped. Moreover,
[9,10] (see also [11]) established an extension of
Yuan's theorem of the alternative to inequality systems involving finitely
many quadratic functions, under the condition that all the Hessian matrices of the quadratic functions
have non-positive off-diagonal elements (up to a nonsingular transformation).
%
This result can be regarded as an extension of Yuan's theorem of the alternative as its assumption becomes superfluous in the case when only two quadratic forms are involved (see [10, Remark 2.3]).

{The purpose of this paper is to extend Yuan's alternative
theorem to symmetric tensors and to provide an application to
nonconvex polynomial optimization}. Tensor (or hypermatrix) is a
multilinear generalization of the concept of matrix.
Recently, Lim [12] and Qi [13]  independently introduced the concept of eigenvalues and singular values
for tensors of higher order.
After this, 
a reasonably
complete and consistent theory of eigenvalues and singular values
for tensors of higher order has been developed in the past few years, which generalizes the theory of matrix eigenvalues and singular values in various manners and extent.
Moreover, numerical study on tensors also has attracted a lot of researchers due to its wide applications in polynomial optimization
[14-19], space tensor programming [20,21],
   spectral hypergraph theory [22-25], high-order Markov chain [26], signal processing [27,28] and image science [29]. In particular, various
efficient numerical schemes have been proposed to find the low rank
approximations of a tensor and  the eigenvalues/eigenvectors of a
tensor with specific structure (cf. [30-37]).

The contribution of this paper are as follows:
\begin{itemize}
 \item Firstly, we provide a tractable extension of Yuan's theorem of the alternative  (Theorem \ref{th:1}) and homogeneous S-lemma  (Corollary \ref{cor:S-lemma}) to the symmetric
tensor setting. We achieve this by exploiting two important features of a special class of tensors (called essentially non-positive tensors): hidden convexity and numerical checkability.
\item Secondly, we establish that the optimal value of a class of nonconvex polynomial optimization problems with suitable sign structure (or more explicitly, essentially nonpositive coefficients) can be computed by a related convex conic programming problem, and the optimal solution of these nonconvex polynomial optimization problems can be recovered from the corresponding solution of the convex conic programming problem. Moreover, we obtain that this class of  nonconvex polynomial optimization problems enjoy exact sum-of-squares relaxation, and so, can be solved via a semidefinite programming problem.
\end{itemize}

The organization of this paper is as follows. In Section 2, we
recall some basic facts of tensors and polynomials, and establish
some basic geometric properties of positive semidefinite tensor
cones. In Section 3, we provide a tractable extension of Yuan's
theorem of the alternative and homogeneous S-lemma to the symmetric tensor
setting. In Section 4, we apply the new theorem of the alternative to
obtain exact conic programming relaxation for nonconvex polynomial
optimization problems with essentially nonpositive coefficients. We
also obtain that these class of  nonconvex polynomial optimization
problems enjoy exact sum-of-squares relaxation. Finally, we
conclude this paper and present some possible future
research topics.
\section{Preliminaries: Positive Semidefinite Tensors}
\subsection{Notations and Basic Facts}
We first fix some notations and recall some basic facts of tensors and polynomials.
We denote the $n$-dimensional Euclidean space as $\mathbb{R}^n$. For $x_1,x_2 \in \mathbb{R}^n$ (as column vectors),  $\langle x_1,x_2\rangle$ denotes
 the inner product between $x_1$ and $x_2$ and is given by $\langle x_1, x_2 \rangle:=x_1^Tx_2$.  Moreover, for all $x \in \mathbb{R}^n$, the norm of $x$ is denoted by $\|x\|$ and is given by $\|x\|:=(\langle x, x \rangle)^{1/2}$.

Let $n \in \mathbb{N}$ and let $m$ be an even number. An $m$th-order $n$-dimensional tensor $\mathcal {A}$ consists of
$n^m$ entries in real number:
$\mathcal {A}=(\mathcal{A}_{i_1i_2\cdots i_m}), \ \ \mathcal{A}_{i_1i_2\cdots i_m} \in
\mathbb{R}, \ \  1\leq i_1,i_2,\cdots, i_m \leq n$.
We say a tensor $\mathcal{A}$ is symmetric if the value of
$\mathcal{A}_{i_1i_2 \cdots i_m}$ is invariant under any permutation
of its indices $\{i_1,i_2,\cdots,i_m\}$. When $m=2$, a
symmetric tensor is nothing but a symmetric matrix.  Consider
$$S_{m,n}:=\{\mathcal{A}: \mathcal{A} \mbox{ is an } m\mbox{th-order }  n\mbox{-dimensional} \mbox{  symmetric tensor}\}. $$
Clearly, $S_{m,n}$ is a vector space under the addition and multiplication
defined as below: for any $t \in \mathbb{R}$,
$\mathcal{A}=(\mathcal{A}_{i_1 \ldots i_m})_{1 \le i_1,\ldots,i_m
\le n}$ and $\mathcal{B}=(\mathcal{B}_{i_1 \ldots i_m})_{1 \le
i_1,\ldots,i_m \le n}$
\[
\mathcal{A}+\mathcal{B}=(\mathcal{A}_{i_1 \ldots i_m}+\mathcal{B}_{i_1 \ldots i_m})_{1
\le i_1,\ldots,i_m \le n} \mbox{ and } t
\mathcal{A}=(t\mathcal{A}_{i_1 \ldots i_m})_{1 \le i_1,\ldots,i_m
\le n}.
\]
For each $\mathcal{A},\mathcal{B} \in S_{m,n}$, we define the inner
product by
\[
\langle
\mathcal{A},\mathcal{B}\rangle:=\sum_{i_1,\ldots,i_m=1}^{n}\mathcal{A}_{i_1 \ldots
i_m}\mathcal{B}_{i_1 \ldots i_m}.
\]
The corresponding norm is defined by $\displaystyle
\|\mathcal{A}\|=(\langle
\mathcal{A},\mathcal{A}\rangle)^{1/2}=\big(\sum_{i_1,\ldots,i_m=1}^{n}(\mathcal{A}_{i_1 \ldots i_m})^2\big)^{1/2}$.
For a vector $x\in
\mathbb{R}^n$, we use $x_i$ to denote its $i$th component.
Moreover, for a  vector $x\in
\mathbb{R}^n$, we use $x^{\otimes m}$ to denote the $m$th-order $n$-dimensional symmetric rank one tensor induced by $x$, i.e.,
\[
(x^{\otimes m})_{i_1 i_2\ldots i_m}=x_{i_1}x_{i_2}\ldots x_{i_m}, \ \forall \, i_1,\ldots,i_m \in \{1,\ldots,n\}.
\]

We now collect some basic facts on real polynomials. Recall that $f\colon\mathbb{R}^n\rightarrow\mathbb{R}$ is a {(real) polynomial} if there exists a number $d\in\mathbb{N}$ such that
$$
f(x):=\sum_{0\le|\alpha|\le d}f_{\alpha}x^{\alpha},
$$
where $f_{\alpha}\in \mathbb{R}$, $x=(x_1,\cdots,x_n)$, $x^{\alpha}:=x_1^{\alpha_1}\cdots x_n^{\alpha_n}$, $\alpha_i\in\mathbb{N}\cup \{0\}$, and $|\alpha|:=\sum_{j=1}^{n}\alpha_j$.
The corresponding number $d$ is called the {  degree} of $f$, and is denoted by ${\rm deg}f$. For a degree $d$ real polynomial $f$ on $\mathbb{R}^n$ with the form
$\displaystyle f(x)=\sum_{0\le|\alpha|\le d}f_{\alpha}x^{\alpha}$,
its {  canonical homogenization} $\tilde{f}$ is a homogeneous polynomial on $\mathbb{R}^{n+1}$ with degree $d$ given by
\[
\tilde{f}(x,t)=\sum_{0\le|\alpha|\le d}f_{\alpha}x^{\alpha}t^{d-|\alpha|}.
\]
A real polynomial $f$ is called a sum-of-squares (SOS) polynomial  if there exist $r \in \mathbb{N}$ and real polynomials $f_j$, $j=1,\ldots,r$, such that $f=\sum_{j=1}^rf_j^2$.
An important property of the sum of squares of polynomials is that checking a polynomial is sum of squares or not, is equivalent to solving a semi-definite linear programming problem (cf. [38-40]). 

Finally, we note that an $m$th-order $n$-dimensional
symmetric tensor uniquely defines an $m$th degree homogeneous
real polynomial $f_{\mathcal{A}}$ on $\mathbb{R}^n$: for all $x=(x_1,\ldots,x_n)^T
\in \mathbb{R}^n$, \[
f_{\mathcal{A}}(x)=\langle \mathcal{A}, x^{\otimes m}\rangle:= \sum_{i_1,\ldots,i_m=1}^{n} \mathcal{A}_{i_1i_2\cdots
i_m}x_{i_1}x_{i_2}\ldots x_{i_m}.\]
Conversely, any $m$th degree homogeneous
polynomial function $f$ on $\mathbb{R}^n$ also uniquely corresponds a symmetric tensor.
Let $n \in \mathbb{N}$ and $m$ be an even number.  Define $I(m,n)=\left(\begin{array}{c}
n+m-1 \\
n-1
\end{array} \right)$. It is known that the space consists of all homogeneous polynomials on $\mathbb{R}^n$ with degree $m$ is a finite dimensional
space with dimension $I(m,n)$. Note that each $\mathcal{A} \in S_{m,n}$ uniquely corresponds a homogeneous polynomial on $\mathbb{R}^n$ with degree $m$. It follows that ${\rm dim}S_{m,n}=I(m,n)$.

\subsection{Positive Semidefinite Tensors and
Their Associated Cones}


\begin{definition}{\bf (PSD tensor cone and SOS tensor cone)}
 Let $m$ be an even number and $n \in \mathbb{N}$. We say an $m$th-order $n$-dimensional symmetric tensor $\mathcal{A}$ is
 \begin{itemize}
 \item[{\rm (i)}]  {\it a positive semi-definite (PSD) tensor} iff
 $f_{\mathcal{A}}(x):=\langle \mathcal{A}, x^{\otimes m} \rangle \ge 0$ for all $x \in \mathbb{R}^n$;
 \item[{\rm (ii)}] {\it a sum-of-squares (SOS) tensor} iff
 $f_{\mathcal{A}}(x):=\langle \mathcal{A}, x^{\otimes m} \rangle$ is a sum-of-squares polynomial.
 \end{itemize}
 Moreover, we define the PSD tensor cone ${\rm PSD}_{m,n}$ (resp. SOS tensor cone ${\rm SOS}_{m,n}$) to be the set consisting of all positive semi-definite (resp. sum-of-squares) $m$th-order $n$-dimensional symmetric tensors.
 \end{definition}


Note that any sum-of-squares polynomial must take non-negative
values. So, ${\rm SOS}_{m,n} \subseteq {\rm PSD}_{m,n}$ for each $m
\in \mathbb{N}$ and $n \in \mathbb{N}$. It is known that [41,42] 
that ${\rm SOS}_{m,n} = {\rm PSD}_{m,n}$ in one of the following three cases: $n=1$; $m = 2$;  $n = 3 \mbox{ and } m = 4$. Moreover, if $m=2$, then
${\rm PSD}_{m,n}$ and ${\rm SOS}_{m,n}$ are equal, and both collapse
to the positive semi-definite matrix cone.    On the other hand, the
inclusion ${\rm SOS}_{m,n} \subseteq {\rm PSD}_{m,n}$ is strict in general. Indeed, let $f_M$ be the homogeneous
Motzkin polynomial
\[f_M(x)=x_3^6 + x_1^2x_2^4 + x_1^4x_2^2-3x_1^2x_2^2x_3^2.
\]
It is
known that {\rm (cf. [43]
)}, $f_M$ takes non-negative value (by the Arithmetic-Geometric inequality), and it is not a sum-of-squares polynomial.
Let $\mathcal{A}_M$ be the symmetric tensor
associated to $f_M$ in the sense that $f_M(x)=\langle \mathcal{A}_M, x^{\otimes 6}\rangle$. Then, we see that $\mathcal{A}_M \in {\rm PSD}_{6,3} \backslash {\rm SOS}_{6,3}$.

Below, we identify a class of tensors with suitable sign structure such that they are sum-of-squares whenever they are positive semidefinite.
\begin{definition}{\bf (Essentially nonpositive/non-negative tensor)}
Define the index set $I$ by $$I:=\{(i,i,\ldots,i) \in \mathbb{N}^m: 1 \le i \le n\}.$$ We say an $m$th-order $n$-dimensional tensor $\mathcal{A}$ is
 \begin{itemize}
 \item[{\rm (i)}]   essentially non-negative iff $\mathcal{A}_{i_1,\ldots,i_m} \ge 0$ for all $\{i_1,\ldots,i_m\} \notin I$.
 \item[{\rm (ii)}]   essentially nonpositive iff $\mathcal{A}_{i_1,\ldots,i_m} \le 0$ for all $\{i_1,\ldots,i_m\} \notin I$.
 \end{itemize} Define $E_{m,n}$ as the set consisting of all essentially nonpositive tensor, that is, $$E_{m,n}:=\{\mathcal{A} \in S_{m,n}: \mathcal{A} \mbox{ is essentially nonpositive}\}.$$
 \end{definition}

In the special case when the order $m=2$,
the definition of essentially nonpositive tensor reduces to the notion of a $Z$-matrix. The class of essentially non-negative tensors was introduced in
[36] (see also [31]), and some interesting
log-convexity results were discussed there.   One interesting example
of essentially nonpositive tensors is the Laplacian tensor of a
hypergraph, which was examined in detail recently in
[23-25]. From the definition,  any
tensor with non-negative entries is essentially non-negative, while the
converse may not be true in general.

For any essentially nonpositive tensor $\mathcal{A}$, we establish that it is positive semi-definite if and only if it is sum-of-squares. To do this, we first recall some definitions and a useful lemma.

Consider a homogeneous polynomial $f(x)=\sum_{\alpha}f_{\alpha}x^{\alpha}$ with degree $m$ ($m$ is an even number). 
Let $f_{m,i}$ be the coefficient associated with $x_i^{m}$ and  \begin{equation} \Omega_f:=\{\alpha:=(\alpha_1,\ldots,\alpha_n) \in (\mathbb{N} \cup \{0\})^n: f_{\alpha} \neq 0 \mbox{ and } \alpha \neq m e_i, \ i=1,\ldots,n\},\end{equation}
where $e_i$ be the vector whose $i$th component is one and all the other components are zero. We note that
\[
f(x)=\sum_{i=1}^n f_{m,i} x_i^{m}+\sum_{\alpha \in \Omega_f}f_{\alpha}x^{\alpha}.
\] Recall that $2\mathbb{N}$ denotes the set consisting of all the even numbers. Define \begin{equation}\label{eq:Deltaf}
\Delta_f:=\{\alpha=(\alpha_1,\ldots,\alpha_n) \in \Omega_f: f_{\alpha} < 0 \mbox{ or } \alpha \notin (2\mathbb{N} \cup \{0\})^n\}.
\end{equation}
We associate to $f$ a new homogeneous polynomial $\hat{f}$, given by
\[
\hat{f}(x)=\sum_{i=1}^n f_{m,i} \, x_i^{m}-\sum_{\alpha \in \Delta_f}|f_{\alpha}|x^{\alpha}.
\]
We now recall the following useful lemma, which provides a test for verifying whether $f$ is a sum of squares polynomial or not in terms of the nonnegativity of the new homogeneous function $\hat{f}$.
\begin{lemma} {\rm ([44, Corollary 2.8])} \label{lemma:2.1}
Let $f$ be a homogeneous polynomial of degree
$m$ where $m$ is an even number. If $\hat{f}$ is a polynomial which always takes non-negative values, then $f$ is a sum-of-squares polynomial.
\end{lemma}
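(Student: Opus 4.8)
The strategy is to strip off the part of $f$ that is patently a sum of monomial squares, and then to reduce the rest to a structural fact about ``diagonal minus tail'' forms. Partition the support as $\Omega_f=\Delta_f\cup\Gamma_f$, $\Gamma_f:=\Omega_f\setminus\Delta_f$. By the definition of $\Delta_f$, every $\alpha\in\Gamma_f$ has $f_\alpha>0$ and $\alpha\in(2\mathbb{N}\cup\{0\})^n$, so $f_\alpha x^\alpha=\big(\sqrt{f_\alpha}\,x^{\alpha/2}\big)^2$, whence $g:=\sum_{\alpha\in\Gamma_f}f_\alpha x^\alpha$ is a sum of squares. Writing $h:=f-g=\sum_{i=1}^n f_{m,i}x_i^m+\sum_{\alpha\in\Delta_f}f_\alpha x^\alpha$, it suffices to show $h$ is a sum of squares.

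Now compare $h$ with $\hat f=\sum_{i=1}^n f_{m,i}x_i^m-\sum_{\alpha\in\Delta_f}|f_\alpha|x^\alpha$. They carry the same diagonal monomials $x_i^m$ and the same tail monomials $x^\alpha$ ($\alpha\in\Delta_f$), their coefficients differing only in sign, and only for those $\alpha$ with $f_\alpha>0$, which by the definition of $\Delta_f$ must then possess an odd component. Since moreover $\alpha\ne m e_i$ for every $\alpha\in\Delta_f$, each tail monomial vanishes on the coordinate axes; hence $\hat f(t e_i)=f_{m,i}t^m$, and the hypothesis $\hat f\ge 0$ forces $f_{m,i}\ge 0$, so that $\hat f$ is genuinely a positive semidefinite diagonal-minus-tail form. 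I would then invoke the cited structural fact that such a form admits a sum-of-squares representation in which every square is the square of a polynomial supported on at most two monomials, say
\[
\hat f\;=\;\sum_{j}\big(\lambda_j x^{\beta_j}+\mu_j x^{\beta_j'}\big)^{2}.
\]
Because $x^{2\beta_j}$ and $x^{2\beta_j'}$ have only even exponents, each ``disputed'' monomial $x^\alpha$ (one with $\alpha\in\Delta_f$, $f_\alpha>0$, hence with an odd component) can occur in this representation only through cross terms $2\lambda_j\mu_j x^\alpha$ with $\beta_j+\beta_j'=\alpha$, the sum of which equals the $x^\alpha$-coefficient of $\hat f$, namely $-|f_\alpha|$. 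Replacing $\mu_j$ by $-\mu_j$ in every binomial whose cross term is such an $x^\alpha$ flips the sign of exactly those cross terms and leaves all diagonal coefficients and all other tail coefficients untouched; the resulting expression is still a sum of squares and now equals $h$. Adding back $g$ shows $f=g+h$ is a sum of squares.

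The step I expect to be the genuine obstacle is the one supplied by the citation: proving that a positive semidefinite diagonal-minus-tail form has a sum-of-squares representation by two-term squares (equivalently, organizing the arithmetic--geometric reductions so that the residual coefficients of the pure powers $x_i^m$ stay nonnegative, which fails if one reduces greedily). This is the technical heart of the Fidalgo--Kovacec-type theory behind [44, Corollary 2.8], established there by induction on the number of tail monomials, peeling off one two-term square at a time. Everything else --- the benign split, the axis evaluation, and the trivial observation that flipping an inner sign in $\big(\lambda x^{\beta}+\mu x^{\beta'}\big)^2$ changes only the cross term --- is routine.
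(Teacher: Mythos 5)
The paper does not actually prove Lemma \ref{lemma:2.1}: it is quoted verbatim from Fidalgo--Kovacec [44, Corollary 2.8], so there is no internal argument to compare yours against. Judged on its own terms, your reduction is sound. The split $\Omega_f=\Delta_f\cup\Gamma_f$ is correct, the part supported on $\Gamma_f$ is indeed a sum of monomial squares, and the bookkeeping showing that $h$ and $\hat f$ differ only in the signs of the coefficients of those $x^\alpha$ with $\alpha\in\Delta_f$, $f_\alpha>0$ (hence with an odd exponent entry) is right. The sign-flip step is also valid as you set it up: since such an $\alpha$ is not an even vector, the monomial $x^\alpha$ can only arise from cross terms $2\lambda_j\mu_j x^{\beta_j+\beta_j'}$ and never from the even monomials $x^{2\beta_j}$, $x^{2\beta_j'}$, so replacing $\mu_j$ by $-\mu_j$ in exactly those binomials alters only the disputed coefficients and converts a representation of $\hat f$ into one of $h$, whence $f=g+h$ is a sum of squares.

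The caveat is that the entire weight of the lemma sits in the step you defer to the citation: that a positive semidefinite diagonal-minus-tail form admits a sum-of-squares representation by \emph{binomial} squares, not merely some SOS representation. This strengthening is genuinely needed. If you only knew $\hat f$ were SOS, an arbitrary decomposition offers no sign to flip, and the naive route $f=\hat f+\sum_{\alpha\in\Gamma_f}f_\alpha x^\alpha+\sum_{\alpha\in\Delta_f,\,f_\alpha>0}2f_\alpha x^\alpha$ fails because the last sum consists of odd-exponent monomials with positive coefficients, which are not even PSD. You identify this correctly as the technical heart and attribute it to the Fidalgo--Kovacec machinery (single-tail AM--GM case plus induction on the tail), which is precisely what [44] supplies. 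So your proposal should be read as a correct derivation of [44, Corollary 2.8] from the main structural theorem of the same reference rather than an independent proof; since the paper itself treats the lemma as a black-box citation, your treatment is on the same footing, and nothing in the elementary part of your argument is wrong --- it relocates, rather than removes, the appeal to the reference.
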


We are now ready to state the fact that, any essentially nonpositive tensor is positive semi-definite if and only if it is sum-of-squares. This fact was essentially established in [31]. For the self-containment purpose, its proof is provided in the appendix for the reader's convenience.
\begin{proposition}\label{prop:0.1}
It holds that ${\rm PSD}_{m,n} \cap E_{m,n}={\rm SOS}_{m,n} \cap E_{m,n}$.
\end{proposition}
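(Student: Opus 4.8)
The plan is to prove the nontrivial inclusion ${\rm PSD}_{m,n}\cap E_{m,n}\subseteq{\rm SOS}_{m,n}\cap E_{m,n}$; the reverse inclusion ${\rm SOS}_{m,n}\cap E_{m,n}\subseteq{\rm PSD}_{m,n}\cap E_{m,n}$ is immediate from ${\rm SOS}_{m,n}\subseteq{\rm PSD}_{m,n}$. So fix $\mathcal{A}\in{\rm PSD}_{m,n}\cap E_{m,n}$ and write $f:=f_{\mathcal{A}}$. By Lemma \ref{lemma:2.1}, it suffices to show that the associated homogeneous polynomial $\hat{f}$ takes only non-negative values, and the strategy is to prove the stronger statement that, because $\mathcal{A}$ is essentially nonpositive, we actually have $\hat f=f$ identically; non-negativity of $\hat f$ is then immediate from $\mathcal{A}\in{\rm PSD}_{m,n}$.

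The first step is the sign bookkeeping relating the coefficients $f_\alpha$ of $f$ to the entries of $\mathcal{A}$. For a multi-index $\alpha$ with $|\alpha|=m$, collecting the $x^\alpha$ terms in $f(x)=\sum_{i_1,\ldots,i_m=1}^{n}\mathcal{A}_{i_1 i_2\cdots i_m}x_{i_1}x_{i_2}\ldots x_{i_m}$ and using the symmetry of $\mathcal{A}$ gives $f_\alpha=\frac{m!}{\alpha_1!\cdots\alpha_n!}\,\mathcal{A}_{i_1\cdots i_m}$ for any index tuple $(i_1,\ldots,i_m)$ whose multiplicities are $\alpha$. The multinomial factor is strictly positive, so $f_\alpha$ and $\mathcal{A}_{i_1\cdots i_m}$ share the same sign. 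Now if $\alpha\in\Omega_f$, then $\alpha\neq m e_i$ for every $i$, so any representative tuple contains at least two distinct indices, hence is not of the constant form $(j,j,\ldots,j)$ and thus lies outside $I$; essential nonpositivity of $\mathcal{A}$ then gives $\mathcal{A}_{i_1\cdots i_m}\le 0$, whence $f_\alpha\le 0$, and in fact $f_\alpha<0$ because $f_\alpha\neq 0$ by the definition of $\Omega_f$.

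Consequently every $\alpha\in\Omega_f$ satisfies the condition ``$f_\alpha<0$'' in the definition of $\Delta_f$, so $\Omega_f\subseteq\Delta_f$; combined with the trivial inclusion $\Delta_f\subseteq\Omega_f$ this yields $\Delta_f=\Omega_f$. For each $\alpha\in\Delta_f=\Omega_f$ we have $|f_\alpha|=-f_\alpha$, so
\[
\hat f(x)=\sum_{i=1}^n f_{m,i}\,x_i^m-\sum_{\alpha\in\Delta_f}|f_\alpha|x^\alpha=\sum_{i=1}^n f_{m,i}\,x_i^m+\sum_{\alpha\in\Omega_f}f_\alpha x^\alpha=f(x).
\]
Since $\mathcal{A}\in{\rm PSD}_{m,n}$, we have $f(x)=\langle\mathcal{A},x^{\otimes m}\rangle\ge 0$ for all $x\in\mathbb{R}^n$, hence $\hat f\ge 0$ everywhere. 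Lemma \ref{lemma:2.1} then gives that $f$ is a sum-of-squares polynomial, i.e.\ $\mathcal{A}\in{\rm SOS}_{m,n}$; together with $\mathcal{A}\in E_{m,n}$ this shows $\mathcal{A}\in{\rm SOS}_{m,n}\cap E_{m,n}$, completing the argument.

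I do not expect a genuine obstacle: once the identity $\hat f=f$ is in place, Proposition \ref{prop:0.1} is a one-line consequence of Lemma \ref{lemma:2.1}. The only point requiring care is the bookkeeping in the second step -- correctly matching a multi-index $\alpha$ with the index tuples of $\mathcal{A}$, tracking the positive multinomial factors so that signs are preserved, and checking that indices in $\Omega_f$ are never pure $m$th powers so that essential nonpositivity of $\mathcal{A}$ applies. A useful sanity check is that $\Omega_f\setminus\Delta_f=\emptyset$ in this setting, i.e.\ there are no ``positive even'' cross terms discarded in forming $\hat f$; this is exactly what the sign analysis above rules out.
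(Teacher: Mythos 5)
Your proposal is correct and follows essentially the same route as the paper's own proof in the appendix: use essential nonpositivity to see that every $f_\alpha$ with $\alpha\in\Omega_f$ is negative, conclude $\Delta_f=\Omega_f$ and hence $\hat f=f$, and then invoke Lemma \ref{lemma:2.1} together with $f\ge 0$. Your explicit tracking of the positive multinomial factor linking $f_\alpha$ to the tensor entries is a slightly more detailed version of the sign bookkeeping the paper leaves implicit, but the argument is the same.
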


 Next, we study the dual cone of ${\rm PSD}_{m,n}$. Recall that for a given closed and convex cone $C$ in $S_{m,n}$, its  dual cone (or positive polar) $C^{\oplus}$ is defined as
\[
C^{\oplus}:=\{\mathcal{X} \in S_{m,n}: \langle \mathcal{X},\mathcal{C} \rangle \ge 0 \mbox{ for all } \mathcal{C} \in C\}.
\] To establish the dual cone of ${\rm PSD}_{m,n}$, we first define a set which is the convex hull of all rank one tensors.
\begin{definition}
 Let $m$ be an even number and $n \in \mathbb{N}$. we define the
set $U_{m,n}$ as the convex hull of all  $m$th-order $n$-dimensional symmetric rank one tensors, that is,
\[
U_{m,n}:={\rm conv}\{x^{\otimes m}: x \in \mathbb{R}^n\}.
\]
\end{definition}

Next, we justify that the set $U_{m,n}$ is indeed a closed convex cone.
\begin{lemma}
 Let $m$ be an even number and $n \in \mathbb{N}$. Then,  $U_{m,n}$ is a closed and convex cone with dimension at most $I(m,n)$.
\end{lemma}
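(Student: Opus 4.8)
The plan is to check the three claims in turn. Convexity is immediate because $U_{m,n}={\rm conv}\{x^{\otimes m}:x\in\mathbb{R}^n\}$ is by definition a convex hull. For the dimension bound, note $U_{m,n}\subseteq S_{m,n}$ and ${\rm dim}\,S_{m,n}=I(m,n)$ (established in the Preliminaries); since $0=0^{\otimes m}\in U_{m,n}$, the affine hull of $U_{m,n}$ is a linear subspace of $S_{m,n}$, hence has dimension at most $I(m,n)$. To see $U_{m,n}$ is a cone, I would first observe that the generating set $K:=\{x^{\otimes m}:x\in\mathbb{R}^n\}$ is itself invariant under multiplication by nonnegative scalars: since $m$ is even, for $t\ge 0$ each entry of $(t^{1/m}x)^{\otimes m}$ equals $(t^{1/m})^m x_{i_1}\cdots x_{i_m}=t\,(x^{\otimes m})_{i_1\cdots i_m}$, so $t\,x^{\otimes m}=(t^{1/m}x)^{\otimes m}\in K$. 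The convex hull of any set closed under nonnegative scaling is a convex cone, so $U_{m,n}$ is a convex cone (equivalently, it equals the conic hull of $K$).

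The substantive part is closedness. Fix $N:={\rm dim}\,U_{m,n}\le I(m,n)$. By the conic form of Carath\'{e}odory's theorem, every $\mathcal{A}\in U_{m,n}$ can be written as a nonnegative combination $\sum_{j=1}^{N}\lambda_j (x_j)^{\otimes m}$ with $\lambda_j\ge 0$; absorbing the scalars via $\lambda_j (x_j)^{\otimes m}=(\lambda_j^{1/m}x_j)^{\otimes m}$ gives a representation $\mathcal{A}=\sum_{j=1}^{N}(y_j)^{\otimes m}$ with $y_j\in\mathbb{R}^n$ and a fixed number $N$ of terms.

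Next I would extract a uniform a priori bound on the $y_j$'s in terms of $\|\mathcal{A}\|$ by testing against the fixed tensor $\mathcal{E}:=\sum_{i=1}^{n}e_i^{\otimes m}\in S_{m,n}$. Because $\langle y^{\otimes m},e_i^{\otimes m}\rangle=y_i^m$ and $m$ is even (so every $y_i^m\ge 0$), we get $\langle y^{\otimes m},\mathcal{E}\rangle=\sum_{i=1}^{n}y_i^m\ge(\max_i|y_i|)^m\ge n^{-m/2}\|y\|^m$, using $\max_i|y_i|\ge\|y\|/\sqrt{n}$. Summing over $j$ yields $\langle\mathcal{A},\mathcal{E}\rangle=\sum_{j=1}^{N}\sum_{i=1}^{n}((y_j)_i)^m\ge n^{-m/2}\sum_{j=1}^{N}\|y_j\|^m$, while Cauchy--Schwarz gives $\langle\mathcal{A},\mathcal{E}\rangle\le\|\mathcal{A}\|\,\|\mathcal{E}\|=\sqrt{n}\,\|\mathcal{A}\|$ (the $e_i^{\otimes m}$ are pairwise orthogonal with unit norm). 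Hence $\|y_j\|\le\big(n^{(m+1)/2}\|\mathcal{A}\|\big)^{1/m}$ for every $j$.

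Finally, given $\mathcal{A}^{(k)}\in U_{m,n}$ with $\mathcal{A}^{(k)}\to\mathcal{A}$, the sequence $(\|\mathcal{A}^{(k)}\|)_k$ is bounded; writing $\mathcal{A}^{(k)}=\sum_{j=1}^{N}(y_j^{(k)})^{\otimes m}$ as above, the estimate just obtained confines all the $y_j^{(k)}$ to a fixed ball, so after passing to a subsequence we may assume $y_j^{(k)}\to y_j$ for each $j$. Continuity of the map $y\mapsto y^{\otimes m}$ then gives $\mathcal{A}=\lim_k\mathcal{A}^{(k)}=\sum_{j=1}^{N}(y_j)^{\otimes m}\in U_{m,n}$, so $U_{m,n}$ is closed. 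The one delicate point is this closedness argument, and within it the uniform control of the representing vectors; the mechanism that makes it work is the positivity estimate $\langle y^{\otimes m},\mathcal{E}\rangle=\sum_i y_i^m\ge n^{-m/2}\|y\|^m$, which keeps the $y_j$'s from escaping to infinity while $\mathcal{A}$ stays bounded. (Alternatively, one can note $U_{m,n}$ is the conic hull of the compact set $\{x^{\otimes m}:\|x\|=1\}$, whose convex hull misses the origin by the same estimate, and invoke the standard fact that the conic hull of such a set is closed.)
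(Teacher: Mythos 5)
Your proof is correct, and its skeleton is the same as the paper's: decompose each element of $U_{m,n}$ via Carath\'eodory into a fixed number of rank-one terms, obtain a uniform bound on the representing vectors in terms of $\|\mathcal{A}\|$, and pass to convergent subsequences. The one genuine difference is the mechanism for the uniform bound. The paper estimates directly
$\|\mathcal{A}_k\|^2 \ge \sum_{j}\sum_{i_1,\ldots,i_m}[(x_k^j)_{i_1}\cdots(x_k^j)_{i_m}]^2 \ge \sum_j\sum_i[(x_k^j)_i]^{2m}$,
which silently uses that the cross terms $\langle (x^j)^{\otimes m},(x^{j'})^{\otimes m}\rangle=\langle x^j,x^{j'}\rangle^m$ are nonnegative because $m$ is even; you instead pair $\mathcal{A}$ against the fixed tensor $\mathcal{E}=\sum_i e_i^{\otimes m}$, use $\langle y^{\otimes m},\mathcal{E}\rangle=\sum_i y_i^m\ge n^{-m/2}\|y\|^m$ and Cauchy--Schwarz. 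Both routes rest on exactly the evenness of $m$, but your version makes that dependence explicit (and a linear functional bound of this kind is the same idea as your closing remark that ${\rm conv}\{x^{\otimes m}:\|x\|=1\}$ misses the origin, the standard criterion for a conic hull to be closed). You also spell out two points the paper treats as immediate: that the generating set is invariant under nonnegative scaling (so the convex hull is a cone and scalars in the Carath\'eodory combination can be absorbed as $\lambda\,x^{\otimes m}=(\lambda^{1/m}x)^{\otimes m}$), and the padding to a fixed number of terms. Net effect: same theorem, same strategy, with a slightly more self-contained justification of the compactness step.
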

\begin{proof}
From the definition, $U_{m,n}$ is a convex cone. Note that $U_{m,n}
\subseteq S_{m,n}$ and $S_{m,n}$ is of dimension $I_{m,n}$. So,
$U_{m,n}$ is a convex cone with dimension at most $I(m,n)$. To see
the closeness of $U_{m,n}$, we let $\mathcal{A}_k \in U_{m,n}$ with
$\mathcal{A}_k \rightarrow \mathcal{A}$. Then, for each $k \in
\mathbb{N}$, by the Carath\'{e}odory theorem, there exist $x_{k}^j \in
\mathbb{R}^n$, $j=1,\ldots,I(m,n)$, such that
\[
\mathcal{A}_k = \sum_{j=1}^{I(m,n)} (x_{k}^j)^{\otimes m}
\]
As $\mathcal{A}_k \rightarrow \mathcal{A}$, $\{\|\mathcal{A}_k\|\}_{k \in \mathbb{N}}$ is a bounded sequence. Note that
\[
\|\mathcal{A}_k\|^2 \ge  \sum_{j=1}^{I(m,n)} \sum_{i_1,\ldots,i_m=1}^{n} [(x_k^j)_{i_1}\ldots (x_k^j)_{i_m}]^2 \ge  \sum_{j=1}^{I(m,n)}\sum_{i=1}^n[(x_k^j)_i]^{2m}.
\]
So, $\{x_k^j\}_{k \in \mathbb{N}}$, $j=1,\ldots,I(m,n)$, are bounded sequences. By passing to subsequences, we can assume that $x_k^j\ \rightarrow x^j$, $j=1,\ldots,I(m,n)$.
Passing to the limit, we have
\[
 \mathcal{A} = \sum_{j=1}^{I(m,n)} (x^j)^{\otimes m} \in U_{m,n}.
\]
Thus, the conclusion follows.
\end{proof}

We now present the duality result between the PSD cone and the rank-one tensor
cone. In the case that $n=3$, Lemma 2.2 and the following result was
established in [21].
\begin{lemma}{\bf (Duality between PSD cone and rank-one tensor cone)}
 It holds that $$(U_{m,n})^{\oplus}={\rm PSD}_{m,n} \mbox{ and }{\rm PSD}_{m,n}^{\oplus}=U_{m,n}.$$
\end{lemma}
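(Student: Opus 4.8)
The plan is to prove the two identities separately and to deduce the second from the first via the bipolar theorem, using the fact (established in the previous lemma) that $U_{m,n}$ is a closed convex cone in the finite-dimensional space $S_{m,n}$.

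First I would establish $(U_{m,n})^{\oplus} = {\rm PSD}_{m,n}$ by a direct double inclusion. Let $\mathcal{X} \in S_{m,n}$. Since every rank-one tensor $x^{\otimes m}$ with $x \in \mathbb{R}^n$ lies in $U_{m,n}$, membership $\mathcal{X} \in (U_{m,n})^{\oplus}$ forces $\langle \mathcal{X}, x^{\otimes m}\rangle \ge 0$ for all $x \in \mathbb{R}^n$; recalling $\langle \mathcal{X}, x^{\otimes m}\rangle = f_{\mathcal{X}}(x)$, this is precisely the statement $\mathcal{X} \in {\rm PSD}_{m,n}$. Conversely, if $\mathcal{X} \in {\rm PSD}_{m,n}$ and $\mathcal{C} \in U_{m,n}$, then by definition of $U_{m,n} = {\rm conv}\{x^{\otimes m} : x \in \mathbb{R}^n\}$ (a cone) we may write $\mathcal{C} = \sum_{j} \lambda_j (x^j)^{\otimes m}$ as a finite nonnegative combination of rank-one tensors, whence by linearity of the inner product $\langle \mathcal{X}, \mathcal{C}\rangle = \sum_j \lambda_j f_{\mathcal{X}}(x^j) \ge 0$, so $\mathcal{X} \in (U_{m,n})^{\oplus}$. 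This yields the first equality.

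Next I would obtain ${\rm PSD}_{m,n}^{\oplus} = U_{m,n}$ by invoking the bipolar theorem for cones: for a closed convex cone $C$ in a finite-dimensional inner product space one has $(C^{\oplus})^{\oplus} = C$. Applying this with $C = U_{m,n}$ (closed and convex by the preceding lemma) gives $(U_{m,n}^{\oplus})^{\oplus} = U_{m,n}$, and substituting the identity $U_{m,n}^{\oplus} = {\rm PSD}_{m,n}$ just proved yields ${\rm PSD}_{m,n}^{\oplus} = U_{m,n}$.

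The argument is essentially routine. The one point genuinely requiring care is the closedness of $U_{m,n}$, since without it the bipolar theorem would only return the closure of $U_{m,n}$; but this has already been settled in the previous lemma. The other ingredient making the two sides match is the correspondence $\langle \mathcal{X}, x^{\otimes m}\rangle = f_{\mathcal{X}}(x)$ between a symmetric tensor and its associated homogeneous polynomial, which is available here because all tensors under consideration are symmetric.
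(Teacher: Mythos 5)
Your proposal is correct and follows essentially the same route as the paper: a direct double-inclusion argument for $(U_{m,n})^{\oplus}={\rm PSD}_{m,n}$ using decompositions of elements of $U_{m,n}$ into rank-one tensors, followed by the bipolar (double polar) theorem combined with the closedness of $U_{m,n}$ from the preceding lemma to get ${\rm PSD}_{m,n}^{\oplus}=U_{m,n}$. The only cosmetic difference is that you keep explicit nonnegative weights $\lambda_j$ in the decomposition while the paper absorbs them into the vectors, which changes nothing.
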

\begin{proof}
Let $\mathcal{Z} \in {\rm PSD}_{m,n}$. Then, for all $x \in \mathbb{R}^n$, $\langle \mathcal{Z}, x^{\otimes m}\rangle \ge 0$. Let $\mathcal{X} \in U_{m,n}$. Then there exist $p \in \mathbb{N}$ and $x_j \in \mathbb{R}^n$, $j=1,\ldots,p$, such that $\mathcal{X}=\sum_{j=1}^p x_j^{\otimes m}$.  It follows that
\[
\langle \mathcal{X}, \mathcal{Z}\rangle= \langle \sum_{j=1}^p x_j^{\otimes m}, \mathcal{Z}\rangle = \sum_{j=1}^p \langle \mathcal{Z}, x_j^{\otimes m}\rangle \ge 0.
\]
Thus, ${\rm PSD}_{m,n} \subseteq (U_{m,n})^{\oplus}$. To see the converse inclusion, let $\mathcal{X} \in (U_{m,n})^{\oplus}$. Note that $x^{\otimes m} \in U_{m,n}$ for all $x \in \mathbb{R}^n$. Thus,
$\langle \mathcal{X},x^{\otimes m} \rangle \ge 0$ for all $x \in \mathbb{R}^n$. This implies that $\mathcal{X} \in {\rm PSD}_{m,n}$, and so,
$(U_{m,n})^{\oplus} \subseteq {\rm PSD}_{m,n}$. Therefore, we see that $(U_{m,n})^{\oplus}={\rm PSD}_{m,n}$.

To see the second assertion, we take polars on both sides of $(U_{m,n})^{\oplus}={\rm PSD}_{m,n}$. It then follows from the double polar theorem in convex analysis (cf [44])
that
\[
{\rm PSD}_{m,n}^{\oplus}=(U_{m,n})^{\oplus\oplus}={\rm cl\, conv} (U_{m,n})=U_{m,n},
\]
where ${\rm cl \, conv}U_{m,n}$ denotes the closed and convex hull of the set $U_{m,n}$, and the last equality follows from the preceding lemma. Thus, the conclusion follows.
\end{proof}


\section{Tensor Analogy of Yuan's Alternative Theorem}
In the section, we provide an extension of Yuan's theorem of the alternative and homogeneous S-lemma to the symmetric
tensor setting. We start with the following technical proposition on hidden convexity which will be useful for our later analysis.

\begin{proposition}{\bf (Hidden Convexity)} \label{prop:1} Let $n,p \in \mathbb{N}$ and let $m$ be an even number. Let $\mathcal{F}_l$ be $m$th-order $n$-dimensional essentially non-positive symmetric tensors, $l=0,1,\ldots,p$. 
Define a set $M \subseteq \mathbb{R}^{p+1}$ by $M:=\{(\langle  \mathcal{F}_0, \mathcal{X} \rangle,\ldots,\langle  \mathcal{F}_p,  x^{\otimes m} \rangle): x \in \mathbb{R}^n\}+ {\rm int}\mathbb{R}^{p+1}_+$. Then, we have
\begin{eqnarray}\label{eq:lulu}
  M =   \{(\langle  \mathcal{F}_0, \mathcal{X} \rangle,\ldots,\langle  \mathcal{F}_p,  \mathcal{X} \rangle): \mathcal{X} \in U_{m,n}\}+ {\rm int}\mathbb{R}^{p+1}_+,
\end{eqnarray}
and $M$ is a convex cone. In particular, the following statements are equivalent:
\begin{itemize}
\item[{\rm (i)}] $(\exists x \in \mathbb{R}^n)\, (\langle \mathcal{F}_l, x^{\otimes m} \rangle < 0, \, l=0,1,\ldots,p)$;
\item[{\rm (ii)}] $(\exists \mathcal{X} \in U_{m,n})\, (\langle \mathcal{F}_l, \mathcal{X} \rangle < 0, \, l=0,1,\ldots,p)$.
\end{itemize}
\end{proposition}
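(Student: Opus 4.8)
\emph{The plan} is to first establish the set identity \eqref{eq:lulu}; the convex cone property of $M$ and the equivalence of (i) and (ii) will then follow quickly. Write $L\colon S_{m,n}\to\mathbb{R}^{p+1}$ for the linear map $L(\mathcal{X}):=(\langle\mathcal{F}_0,\mathcal{X}\rangle,\ldots,\langle\mathcal{F}_p,\mathcal{X}\rangle)$, so that the generating set of $M$ is $\{L(x^{\otimes m}):x\in\mathbb{R}^n\}$. Since $x^{\otimes m}\in U_{m,n}$ for every $x\in\mathbb{R}^n$, the inclusion ``$\subseteq$'' in \eqref{eq:lulu} is immediate. For the reverse inclusion I would reduce everything to the following key claim: \emph{for every $\mathcal{X}\in U_{m,n}$ there exists $z\in\mathbb{R}^n$ with $L(z^{\otimes m})\le L(\mathcal{X})$ componentwise}. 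Indeed, given the claim, for any $v\in\mathrm{int}\,\mathbb{R}^{p+1}_+$ one writes $L(\mathcal{X})+v=L(z^{\otimes m})+\big((L(\mathcal{X})-L(z^{\otimes m}))+v\big)$, and the bracketed vector lies in $\mathrm{int}\,\mathbb{R}^{p+1}_+$, so $L(\mathcal{X})+v\in M$.

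To prove the key claim I would use Carath\'{e}odory's theorem (as in the proof that $U_{m,n}$ is closed) together with the identity $\lambda\,x^{\otimes m}=(\lambda^{1/m}x)^{\otimes m}$, valid since $m$ is even, to write $\mathcal{X}=\sum_{j=1}^q x_j^{\otimes m}$ for finitely many $x_j\in\mathbb{R}^n$; thus $L(\mathcal{X})=\sum_{j=1}^q L(x_j^{\otimes m})$. Then I would set $z\in\mathbb{R}^n$ with $z_i:=\big(\sum_{j=1}^q (x_j)_i^{m}\big)^{1/m}\ge 0$ (the radicand is nonnegative because $m$ is even). Fixing $l$ and splitting $f_{\mathcal{F}_l}(z)$ into its diagonal sum $\sum_{i=1}^n(\mathcal{F}_l)_{i\cdots i}z_i^m$ and its remaining sum over ordered tuples $(i_1,\ldots,i_m)\notin I$, the diagonal sum equals $\sum_{j=1}^q\sum_{i=1}^n(\mathcal{F}_l)_{i\cdots i}(x_j)_i^m$ by the choice of $z$, while for each tuple $(i_1,\ldots,i_m)\notin I$, H\"{o}lder's inequality with the $m$ conjugate exponents all equal to $m$ gives
\[
z_{i_1}\cdots z_{i_m}=\prod_{k=1}^m\Big(\sum_{j=1}^q|(x_j)_{i_k}|^m\Big)^{1/m}\ \ge\ \sum_{j=1}^q\prod_{k=1}^m|(x_j)_{i_k}|\ \ge\ \sum_{j=1}^q(x_j)_{i_1}\cdots(x_j)_{i_m}.
\]
Since $(\mathcal{F}_l)_{i_1\cdots i_m}\le 0$ for all such tuples (this is exactly where essential nonpositivity enters), multiplying through reverses the inequality, and adding the diagonal and off-diagonal contributions gives $f_{\mathcal{F}_l}(z)\le\sum_{j=1}^q f_{\mathcal{F}_l}(x_j)$, i.e. $\langle\mathcal{F}_l,z^{\otimes m}\rangle\le\langle\mathcal{F}_l,\mathcal{X}\rangle$. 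Since $z$ depends only on $x_1,\ldots,x_q$ and not on $l$, the same $z$ works for all $l=0,\ldots,p$ at once, proving the claim and hence \eqref{eq:lulu}. I expect this construction of $z$ and the accompanying H\"{o}lder estimate to be the main obstacle: it is the only place the sign structure of the $\mathcal{F}_l$ is exploited, and it is what produces the ``hidden convexity''.

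Finally, granting \eqref{eq:lulu}, I would finish as follows. From \eqref{eq:lulu}, $M=L(U_{m,n})+\mathrm{int}\,\mathbb{R}^{p+1}_+$; since $U_{m,n}$ is a convex cone (Section 2) and $L$ is linear, $L(U_{m,n})$ is a convex cone, $\mathrm{int}\,\mathbb{R}^{p+1}_+$ is a convex cone, and a Minkowski sum of convex cones is a convex cone, so $M$ is a convex cone. For the equivalence, statement (i) says $-L(x^{\otimes m})\in\mathrm{int}\,\mathbb{R}^{p+1}_+$ for some $x$, i.e. $0\in\{L(x^{\otimes m}):x\in\mathbb{R}^n\}+\mathrm{int}\,\mathbb{R}^{p+1}_+=M$; statement (ii) says $0\in L(U_{m,n})+\mathrm{int}\,\mathbb{R}^{p+1}_+$, which by \eqref{eq:lulu} is again $M$. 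Hence (i) $\Leftrightarrow 0\in M\Leftrightarrow$ (ii).
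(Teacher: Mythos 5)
Your proposal is correct and follows essentially the same route as the paper: the same vector $z$ with $z_i=\sqrt[m]{\mathcal{X}_{i\cdots i}}$, the same generalized H\"{o}lder estimate on the off-diagonal terms, and the same use of essential nonpositivity to obtain $\langle\mathcal{F}_l,z^{\otimes m}\rangle\le\langle\mathcal{F}_l,\mathcal{X}\rangle$ (the paper's Remark 3.1), with the convexity and the equivalence of (i) and (ii) then read off from the set identity exactly as you do.
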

 \begin{proof}
To see (\ref{eq:lulu}), we first note that
\[
 M=\{(\langle  \mathcal{F}_0, \mathcal{X} \rangle,\ldots,\langle  \mathcal{F}_p,  x^{\otimes m} \rangle): x \in \mathbb{R}^n\}+ {\rm int}\mathbb{R}^{p+1}_+ \subseteq    \{(\langle  \mathcal{F}_0, \mathcal{X} \rangle,\ldots,\langle  \mathcal{F}_p,  \mathcal{X} \rangle): \mathcal{X} \in U_{m,n}\}+ {\rm int}\mathbb{R}^{p+1}_+
\]
always holds. To get the reverse inclusion, we let $$(u_0,\ldots,u_{p}) \in    \{(\langle  \mathcal{F}_0, \mathcal{X} \rangle,\ldots,\langle  \mathcal{F}_p,  \mathcal{X} \rangle): \mathcal{X} \in U_{m,n}\}+ {\rm int}\mathbb{R}^{p+1}_+ \, .$$
Then, there exist $\mathcal{X} \in U_{m,n}$ such that
\begin{equation}\label{eq:dreamlulu}
\langle  \mathcal{F}_l, \mathcal{X} \rangle < u_l,\ l=0,1,\ldots,p.
\end{equation}
As $\mathcal{X} \in U_{m,n}$ and $U_{m,n}$ is a closed and convex cone with dimension at most $I(m,n)$, there exist $u^j \in \mathbb{R}^n$ such that
\begin{equation}\label{eq:star}
\mathcal{X} = \sum_{j=1}^{I(m,n)} (u^j)^{\otimes m}.
\end{equation}
Define $\bar x \in \mathbb{R}^n$ by $$\bar x=(\sqrt[m]{\mathcal{X}_{1,\ldots,1}},\ldots,\sqrt[m]{\mathcal{X}_{n,\ldots,n}})=\left(\, \sqrt[m]{\sum_{j=1}^{I(m,n)}(u^j)_1^m},\ldots,\sqrt[m]{\sum_{j=1}^{I(m,n)}(u^j)_n^m}\, \right).$$
We now show that $\langle \mathcal{F}_l, \bar x^{\otimes m} \rangle \le \langle \mathcal{F}_l, \mathcal{X} \rangle$ for all  $l=0,1,\ldots,p$. To see this, let $$I=\{(i_1,\ldots,i_m): i_1=\cdots=i_m\}.$$  Then, for each $l=0,1,\ldots,p$, we have
\begin{eqnarray}\label{eq:uu}
 \langle \mathcal{F}_l, \bar x^{\otimes m} \rangle
 & = & \sum_{i_1,\ldots,i_m=1}^n (\mathcal{F}_l)_{i_1 \ldots i_m} \sqrt[m]{\sum_{j=1}^{I(m,n)}(u^j)_{i_1}^m} \ldots \sqrt[m]{\sum_{j=1}^{I(m,n)}(u^j)_{i_m}^m}  \nonumber \\
 & = & [\sum_{i=1}^n (\mathcal{F}_l)_{i \ldots i} \, \big(\sum_{j=1}^{I(m,n)}(u^j)_{i}^m \big) ] + \sum_{(i_1,\ldots,i_m) \notin I}(\mathcal{F}_l)_{i_1 \ldots i_m} \sqrt[m]{\sum_{j=1}^{I(m,n)}(u^j)_{i_1}^m} \ldots \sqrt[m]{\sum_{j=1}^{I(m,n)}(u^j)_{i_m}^m} \nonumber \\
 & = & [\sum_{i=1}^n (\mathcal{F}_l)_{i \ldots i} \, \mathcal{X}_{i \ldots i}] + \sum_{(i_1,\ldots,i_m) \notin I}(\mathcal{F}_l)_{i_1 \ldots i_m} \sqrt[m]{\sum_{j=1}^{I(m,n)}(u^j)_{i_1}^m} \cdots \sqrt[m]{\sum_{j=1}^{I(m,n)}(u^j)_{i_m}^m}\ .
\end{eqnarray}
Recall the following generalized H\"{o}lder inequality (cf [45]): for $q \in \mathbb{N}$ and $a_{kj} \ge 0$, $k=1,\ldots,m$ and $j=1,\ldots,q$
\[
\prod_{k=1}^m \sum_{j=1}^q a_{kj} \ge \left(\, \sum_{j=1}^q\sqrt[m]{\prod_{k=1}^ma_{kj}} \, \right)^m.
\]
Applying this inequality with $a_{kj}=(u^j)_{i_k}^m \ge 0$ and $q=I(m,n)$, we have
\[
\prod_{k=1}^m \sum_{j=1}^{I(m,n)} (u^j)_{i_k}^m \ge \left(\, \sum_{j=1}^{I(m,n)}\sqrt[m]{\prod_{k=1}^m (u^j)_{i_k}^m} \, \right)^m=\left(\, \sum_{j=1}^{I(m,n)} {\prod_{k=1}^m |(u^j)_{i_k}}| \, \right)^m.
\]
and so,
\[
\prod_{k=1}^m \sqrt[m]{\sum_{j=1}^{I(m,n)} (u^j)_{i_k}^m} \ge  \sum_{j=1}^{I(m,n)} {\prod_{k=1}^m (u^j)_{i_k}}. \]
As $\mathcal{F}_l$ are essentially non-positive, for each $l=0,1,\ldots,p$, $(\mathcal{F}_l)_{i_1 \ldots i_m} \le 0$ for all $(i_1,\ldots,i_m) \notin I$.
This together with (\ref{eq:uu}) implies that
\begin{eqnarray*}
\langle \mathcal{F}_l, \bar x^{\otimes m} \rangle  & \le & \sum_{i=1}^n (\mathcal{F}_l)_{i \ldots i} \, \mathcal{X}_{i \ldots i} + \sum_{(i_1,\ldots,i_m) \notin I}(\mathcal{F}_l)_{i_1 \ldots i_m} \sum_{j=1}^{I(m,n)} (u^j)_{i_1}\cdots(u^j)_{i_m} \\
 & = & \sum_{i=1}^n (\mathcal{F}_l)_{i \ldots i} \, \mathcal{X}_{i \ldots i}+ \sum_{(i_1,\ldots,i_m) \notin I}(\mathcal{F}_l)_{i_1 \ldots i_m} \mathcal{X}_{i_1 \ldots i_m} \\
 & = & \langle \mathcal{F}_l, \mathcal{X} \rangle ,
\end{eqnarray*}
where the first equality follows from (\ref{eq:star}).
Thus, from (\ref{eq:dreamlulu}), we have $\langle \mathcal{F}_l, \bar x^{\otimes m} \rangle < u_l, \ l=0,1,\ldots,p.$ So,
$(u_0,\ldots,u_{p}) \in  M=\{(\langle  \mathcal{F}_0, \mathcal{X} \rangle,\ldots,\langle  \mathcal{F}_p,  x^{\otimes m} \rangle): x \in \mathbb{R}^n\}+ {\rm int}\mathbb{R}^{p+1}_+ $, and hence
(\ref{eq:lulu}) holds.  From (\ref{eq:lulu}),  we see that $M$ is clearly a convex cone. Finally, the equivalence between the statements {\rm (i)} and {\rm (ii)} follows immediately by (\ref{eq:lulu}).
%
%
 \end{proof}
\begin{remark}{\bf (A useful inequality)}\label{remark:2.1}
The proof of the preceding proposition gives us the following useful inequality: Let $\mathcal{X} \in  U_{m,n}$ and $\mathcal{F} \in E_{m,n}$. Define $\bar x=(\sqrt[m]{\mathcal{X}_{1,\ldots,1}},\ldots,\sqrt[m]{\mathcal{X}_{n,\ldots,n}}).$ Then, we have
\[
\langle \mathcal{F}, \bar x^{\otimes m} \rangle \le \langle \mathcal{F}, \mathcal{X} \rangle.
\]
\end{remark}
  Let $P=(P_{ij})$ be an $n \times n$  real matrix. Define $\mathcal{B}= P^m  \mathcal{A}$ as an $m$th-order $n$-dimensional  tensor where its entries are given by
 \[
 \mathcal{B}_{i_1 \cdots i_m}=\sum_{j_1 \cdots j_m=1}^n P_{i_1 j_1} \cdots P_{i_m j_m} \mathcal{A}_{j_1 \cdots j_m} .
 \]

 \begin{lemma}\label{lemma:1}
 For a symmetric  $m$th-order $n$-dimensional tensor $\mathcal{A}$ and an $(n \times n)$ matrix $P$, we have
$\langle \mathcal{A}, (P^Tx)^{\otimes m}\rangle = \langle P^m \mathcal{A}, x^{\otimes m}\rangle \mbox{ for all } x \in \mathbb{R}^n$.
\begin{proof}
From the definition, we have
{\small \begin{eqnarray*}
\langle \mathcal{A}, (P^Tx)^{\otimes m}\rangle =  \sum_{i_1 \cdots i_m=1}^n \mathcal{A}_{i_1 \cdots i_m} (P^Tx)_{i_1}\cdots (P^T x)_{i_m}&=& \sum_{i_1 \cdots i_m=1}^n \mathcal{A}_{i_1 \cdots i_m} (\sum_{j_1=1}^n P_{j_1 i_1}x_{j_1})\cdots (\sum_{j_m=1}^n P_{j_m i_m}x_{j_m}) \\
& = & \sum_{i_1 \cdots i_m=1}^n \mathcal{A}_{i_1 \cdots i_m} \sum_{j_1 \cdots j_m=1}^n \bigg(P_{j_1 i_1}x_{j_1} \cdots P_{j_m i_m}x_{j_m}\bigg) \\
& = & \sum_{i_1 \cdots i_m=1}^n \sum_{j_1 \cdots j_m=1}^n \mathcal{A}_{i_1 \cdots i_m}  \bigg(P_{j_1 i_1}x_{j_1} \cdots P_{j_m i_m}x_{j_m}\bigg) \\
& = & \sum_{j_1 \cdots j_m=1}^n\sum_{i_1 \cdots i_m=1}^n  P_{j_1 i_1}\cdots P_{j_m i_m} \mathcal{A}_{i_1 \cdots i_m}  x_{j_1} \cdots x_{j_m}\ .
\end{eqnarray*}}
Note that
\[
(P^m \mathcal{A})_{j_1 \cdots j_m}=\sum_{i_1 \cdots i_m=1}^n  P_{j_1 i_1}\cdots P_{j_m i_m} \mathcal{A}_{i_1 \cdots i_m}.
\]
It follows that
\begin{eqnarray*}
\langle \mathcal{A}, (P^Tx)^{\otimes m}\rangle & = & \sum_{j_1 \cdots j_m=1}^n\sum_{i_1 \cdots i_m=1}^n  P_{j_1 i_1}\cdots P_{j_m i_m} \mathcal{A}_{i_1 \cdots i_m}  x_{j_1} \cdots x_{j_m} \\
& = &  \sum_{j_1 \cdots j_m=1}^n (P^m \mathcal{A})_{j_1 \cdots j_m} x_{j_1} \cdots x_{j_m}\\
& = &  \langle P^m \mathcal{A}, x^{\otimes m}\rangle.
\end{eqnarray*}
Thus the conclusion follows.
\end{proof}
 \end{lemma}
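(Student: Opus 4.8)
The plan is to prove the identity by a direct expansion of both sides together with a harmless reordering of finitely many summations. First I would write out the left-hand side using the definition of the inner product on $S_{m,n}$ and of the rank-one tensor $(P^Tx)^{\otimes m}$, namely
\[
\langle \mathcal{A}, (P^Tx)^{\otimes m}\rangle = \sum_{i_1,\ldots,i_m=1}^n \mathcal{A}_{i_1\cdots i_m}\,(P^Tx)_{i_1}\cdots(P^Tx)_{i_m}.
\]
The key observation is simply that $(P^Tx)_{i} = \sum_{j=1}^n (P^T)_{ij}x_j = \sum_{j=1}^n P_{ji}x_j$, so that each of the $m$ factors $(P^Tx)_{i_k}$ is itself a finite sum; substituting these expansions and distributing the product over the $m$ factors turns the right-hand side into a finite multiple sum ranging over all tuples $(i_1,\ldots,i_m)$ and $(j_1,\ldots,j_m)$.

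Next I would interchange the (finite, hence unconditionally reorderable) summations so that the $j$-indices are the outer ones, and collect the coefficient of $x_{j_1}\cdots x_{j_m}$. That coefficient is exactly $\sum_{i_1,\ldots,i_m=1}^n P_{j_1 i_1}\cdots P_{j_m i_m}\mathcal{A}_{i_1\cdots i_m}$, which, upon relabelling the roles of the $i$'s and $j$'s in the definition of $\mathcal{B}=P^m\mathcal{A}$ given just above, is precisely $(P^m\mathcal{A})_{j_1\cdots j_m}$. Hence
\[
\langle \mathcal{A}, (P^Tx)^{\otimes m}\rangle = \sum_{j_1,\ldots,j_m=1}^n (P^m\mathcal{A})_{j_1\cdots j_m}\, x_{j_1}\cdots x_{j_m} = \langle P^m\mathcal{A}, x^{\otimes m}\rangle,
\]
which is the asserted equality.

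The only point that deserves care is the placement of indices in $P$: since the vector raised to the tensor power is $P^Tx$ rather than $Px$, the matrix entry that surfaces in the expansion is $P_{ji}$ (the $(j,i)$ entry of $P$), and this is exactly what makes the resulting coefficient coincide with the definition of $P^m\mathcal{A}$ rather than with its transpose-analogue. Beyond this multi-index bookkeeping the argument is entirely mechanical; in particular the symmetry of $\mathcal{A}$ is not needed for the displayed identity itself, though it does ensure as a byproduct that $P^m\mathcal{A}$ is again symmetric, so that $\langle P^m\mathcal{A}, x^{\otimes m}\rangle$ is a legitimate inner product in $S_{m,n}$. I therefore expect no genuine obstacle here — the ``hard part'' is merely keeping the $m$-fold index notation straight through the change of summation order.
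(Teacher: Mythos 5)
Your proposal is correct and follows essentially the same route as the paper's own proof: expand $(P^Tx)_{i_k}=\sum_{j}P_{ji_k}x_{j}$, distribute the product, interchange the finite sums, and identify the resulting coefficient of $x_{j_1}\cdots x_{j_m}$ with the defining formula for $(P^m\mathcal{A})_{j_1\cdots j_m}$. Your added remark that the symmetry of $\mathcal{A}$ is not needed for the identity itself is accurate and harmless.
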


 We are now ready to state the extension of Yuan's theorem of the alternative in symmetric tensor setting.
 \begin{theorem}{\bf (Tensor Analogy of Yuan's Alternative Theorem)} \label{th:1}
 Let $n,p \in \mathbb{N}$ and let $m$ be an even number. Let $\mathcal{F}_l$, $l=0,1,\ldots,p$, be $m$th-order $n$-dimensional symmetric tensors. Suppose that there exists a nonsingular $(n \times n)$ matrix $P$
such that $P^m\mathcal{F}_l$, $l=0,1,\ldots,p$, are all essentially nonpositive tensors.  Then, one and exactly one of the following statements holds:
\begin{itemize}
\item[{\rm (i)}] $(\exists x \in \mathbb{R}^n)\, (\langle \mathcal{F}_l, x^{\otimes m} \rangle < 0, \, l=0,1,\ldots,p)$;
\item[{\rm (ii)}] $\displaystyle (\exists \lambda_l \ge 0, l=0,1,\ldots,p, \sum_{l=0}^p \lambda_l=1) \, (\sum_{l=0}^{p}\lambda_l \mathcal{F}_l \in {\rm SOS}_{m,n})$,
\end{itemize}
where ${\rm SOS}_{m,n}$ is the $m$th-order $n$-dimensional sum-of-squares tensor cone and $x^{\otimes m}$ is the $m$th-order $n$-dimensional rank-one tensor induced by $x$.
 \end{theorem}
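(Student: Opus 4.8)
The plan is to establish the ``exactly one'' dichotomy in the two usual halves, using the hidden convexity of Proposition~\ref{prop:1}, the duality between $U_{m,n}$ and ${\rm PSD}_{m,n}$, and Proposition~\ref{prop:0.1}. For the \emph{at most one} direction, I would first rule out that (i) and (ii) hold simultaneously: if $x$ realizes (i) and $\lambda_l\ge0$, $\sum_l\lambda_l=1$, $\sum_l\lambda_l\mathcal{F}_l\in{\rm SOS}_{m,n}\subseteq{\rm PSD}_{m,n}$ realize (ii), then evaluating the positive semi-definite tensor at $x^{\otimes m}$ gives $0\le\langle\sum_l\lambda_l\mathcal{F}_l,x^{\otimes m}\rangle=\sum_l\lambda_l\langle\mathcal{F}_l,x^{\otimes m}\rangle<0$, a contradiction.

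For the \emph{at least one} direction, assume (i) fails and deduce (ii). Put $\mathcal{G}_l:=P^m\mathcal{F}_l$, which lie in $E_{m,n}$ by hypothesis, so Lemma~\ref{lemma:1} gives $\langle\mathcal{F}_l,(P^Tx)^{\otimes m}\rangle=\langle\mathcal{G}_l,x^{\otimes m}\rangle$ for all $x$. Since $P$ (hence $P^T$) is nonsingular, $x\mapsto P^Tx$ is a bijection of $\mathbb{R}^n$, so the failure of (i) is exactly the statement that the system $\langle\mathcal{G}_l,y^{\otimes m}\rangle<0$, $l=0,\dots,p$, has no solution. Applying the hidden convexity result (Proposition~\ref{prop:1}) to the essentially non-positive tensors $\mathcal{G}_0,\dots,\mathcal{G}_p$, this is equivalent to: there is no $\mathcal{X}\in U_{m,n}$ with $\langle\mathcal{G}_l,\mathcal{X}\rangle<0$ for all $l$; equivalently $0\notin M$, where $M:=\{(\langle\mathcal{G}_0,\mathcal{X}\rangle,\dots,\langle\mathcal{G}_p,\mathcal{X}\rangle):\mathcal{X}\in U_{m,n}\}+{\rm int}\,\mathbb{R}^{p+1}_+$.

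Next I would separate. By Proposition~\ref{prop:1}, $M$ is a convex cone; it is nonempty (it contains ${\rm int}\,\mathbb{R}^{p+1}_+$, taking $\mathcal{X}=0\in U_{m,n}$) and $0\notin M$, so there is $\lambda=(\lambda_0,\dots,\lambda_p)\neq0$ with $\langle\lambda,w\rangle\ge0$ for all $w\in M$. Feeding in the points of ${\rm int}\,\mathbb{R}^{p+1}_+\subseteq M$ and passing to the closure forces $\lambda_l\ge0$ for every $l$, and after rescaling $\sum_l\lambda_l=1$. For each fixed $\mathcal{X}\in U_{m,n}$, letting the ${\rm int}\,\mathbb{R}^{p+1}_+$-component shrink to $0$ yields $\langle\sum_l\lambda_l\mathcal{G}_l,\mathcal{X}\rangle=\sum_l\lambda_l\langle\mathcal{G}_l,\mathcal{X}\rangle\ge0$; since this holds for all $\mathcal{X}\in U_{m,n}$, the duality relation $(U_{m,n})^{\oplus}={\rm PSD}_{m,n}$ gives $\sum_l\lambda_l\mathcal{G}_l\in{\rm PSD}_{m,n}$. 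As $E_{m,n}$ is closed under non-negative linear combinations, $\sum_l\lambda_l\mathcal{G}_l\in E_{m,n}$ too, so Proposition~\ref{prop:0.1} yields $\sum_l\lambda_l\mathcal{G}_l\in{\rm SOS}_{m,n}$.

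Finally I would transfer the sum-of-squares property back through $P$: writing $\langle\sum_l\lambda_l\mathcal{G}_l,x^{\otimes m}\rangle=\sum_j q_j(x)^2$ and invoking Lemma~\ref{lemma:1} once more gives $\langle\sum_l\lambda_l\mathcal{F}_l,(P^Tx)^{\otimes m}\rangle=\sum_j q_j(x)^2$; substituting $x=(P^{-1})^Ty$ shows $\langle\sum_l\lambda_l\mathcal{F}_l,y^{\otimes m}\rangle=\sum_j q_j((P^{-1})^Ty)^2$ is a sum of squares of polynomials in $y$, i.e.\ $\sum_l\lambda_l\mathcal{F}_l\in{\rm SOS}_{m,n}$, which is (ii). I expect the only delicate points to be the bookkeeping of the linear change of variables---in particular the fact that it is $\sum_l\lambda_l\mathcal{G}_l$, not $\sum_l\lambda_l\mathcal{F}_l$, that is essentially non-positive, so Proposition~\ref{prop:0.1} must be applied to the transformed tensor and the SOS certificate pulled back---together with the replacement of the nonconvex rank-one feasibility problem by the convex one over $U_{m,n}$, which is exactly what Proposition~\ref{prop:1} provides.
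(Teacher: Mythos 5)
Your proposal is correct and follows essentially the same route as the paper's own proof: the same contradiction argument for the ``at most one'' half, and for the other half the same chain of change of variables via $P$ (Lemma \ref{lemma:1}), hidden convexity (Proposition \ref{prop:1}), separation of the convex cone from the origin, duality $(U_{m,n})^{\oplus}={\rm PSD}_{m,n}$, Proposition \ref{prop:0.1} applied to the transformed tensor, and pulling the SOS certificate back through $(P^T)^{-1}$. No gaps; the delicate points you flag are exactly the ones the paper handles in the same way.
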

\begin{proof}

[${\rm (ii)} \Rightarrow {\rm Not} {\rm (i)}$] Suppose that statement {\rm (ii)} holds. Then, there exist $\lambda_l \ge 0$, $l=0,1,\ldots,p$, with $\sum_{l=0}^p\lambda_l =1$ such that
\[
\sum_{l=0}^{p}\lambda_l \mathcal{F}_l \in {\rm SOS}_{m,n}.
\] We now establish that {\rm (i)} must fail by using the method of contradiction. Suppose that ${\rm (i)}$ holds.
Then, there exists  $u \in \mathbb{R}^n$ be such that $ \langle \mathcal{F}_l ,u^{\otimes m}\rangle < 0$, $l=0,1,\ldots,p$. It follows that
\[
0 \le \langle \sum_{l=0}^{p}\lambda_l \mathcal{F}_l, u^{\otimes m}\rangle  \le \max_{0 \le l \le p}\langle \mathcal{F}_l ,u^{\otimes m}\rangle <0.
\]
This is impossible, and so {\rm (i)} must fail.

[${\rm Not} {\rm (i)} \Rightarrow {\rm (ii)}$] Suppose that {\rm (i)} fails. Then, the following system has no solution:
$$(\exists x \in \mathbb{R}^n)\, (\langle \mathcal{F}_l, x^{\otimes m} \rangle < 0, \, l=0,1,\ldots,p).$$ Letting $x=P^Ty$, this implies that the following system  has no solution:
$$(\exists y \in \mathbb{R}^n)\, (\langle \mathcal{F}_l, (P^Ty)^{\otimes m} \rangle < 0, \, l=0,1,\ldots,p).$$
Note from the preceding lemma that $\langle \mathcal{F}_l, (P^Ty)^{\otimes m}\rangle =\langle P^m \mathcal{F}_l, y^{\otimes m}\rangle$.
This together with the equivalence between statements {\rm (i)} and {\rm (ii)} in Proposition \ref{prop:1} implies that  the following system also has no solution
\[
(\exists \mathcal{X} \in U_{m,n})(\langle P^m \mathcal{F}_l, \mathcal{X} \rangle < 0, \, l=0,1,\ldots,p).
\]
This implies that
$(0,\ldots,0) \notin \{(\langle P^m \mathcal{F}_0, \mathcal{X} \rangle,\ldots,\langle P^m \mathcal{F}_p, \mathcal{X} \rangle):\mathcal{X} \in U_{m,n}\}+ {\rm int}\mathbb{R}^{p+1}_+$.
As $U_{m,n}$ is a convex cone, $\{(\langle P^m \mathcal{F}_0, \mathcal{X} \rangle,\ldots,\langle P^m \mathcal{F}_p, \mathcal{X} \rangle):\mathcal{X} \in U_{m,n}\}$ is also a convex cone, and so,
\[
C:=\{(\langle P^m \mathcal{F}_0, \mathcal{X} \rangle,\ldots,\langle P^m \mathcal{F}_p, \mathcal{X} \rangle):\mathcal{X} \in U_{m,n}\}+ {\rm int}\mathbb{R}^{p+1}_+
\]
is a convex cone. Then, the standard separation theorem (cf [44, Theorem 1.1.3]) implies that there exists $(\mu_0,\ldots,\mu_p) \in \mathbb{R}^{p+1}\backslash\{0\}$ such that
\[
0 \le \sum_{l=0}^{p}\mu_l a_l \  \mbox{ for all } \  (a_0,a_1,\ldots,a_p) \in C.
\]
As $C+{\rm int}\mathbb{R}^{p+1}_+ \subseteq C$, it follows that $\mu_l \ge 0$, $l=0,1,\ldots,p$. So, $(\mu_0,\ldots,\mu_p) \in \mathbb{R}^{p+1}_+\backslash\{0\}$ and hence $\sum_{l=0}^p\mu_l >0$. Let $\lambda_l=\frac{\mu_l}{\sum_{l=0}^p\mu_l}\ge 0$. Then, $\sum_{l=0}^p\lambda_l=1$ and
\[
\sum_{l=0}^{p}\lambda_l a_l \ge 0 \  \mbox{ for all } \  (a_0,a_1,\ldots,a_p) \in C
\]
In particular, this shows that, for each $\epsilon>0$,
\[
\sum_{l=0}^{p}\lambda_l (\langle P^m\mathcal{F}_l, \mathcal{X} \rangle)+\epsilon=\sum_{l=0}^{p}\lambda_l (\langle P^m \mathcal{F}_l, \mathcal{X}  \rangle+\epsilon) \ge 0 \mbox{ for all } \mathcal{X} \in  U_{m,n}.
\]
Let $\epsilon \rightarrow 0$. This implies that
\[
\sum_{l=0}^{p}\lambda_l \langle P^m \mathcal{F}_l, \mathcal{X} \rangle  \ge 0 \mbox{ for all } \mathcal{X} \in  U_{m,n}.
\]
In other words,
\[
\sum_{l=0}^{p}\lambda_l \, P^m \mathcal{F}_l \in (U_{m,n})^{\oplus} ={\rm PSD}_{m,n}.
\]
To finish the proof, we only need to show that $\displaystyle \sum_{l=0}^{p}\lambda_l \, \mathcal{F}_l \in {\rm SOS}_{m,n}$. To see this, note from our assumption that
$\displaystyle \sum_{l=0}^{p}\lambda_l \, P^m \mathcal{F}_l \in E_{m,n}$. It follows that $\displaystyle \sum_{l=0}^{p}\lambda_l \, P^m \mathcal{F}_l \in {\rm PSD}_{m,n} \cap E_{m,n}$. Then, Proposition \ref{prop:0.1} gives us that $\displaystyle \sum_{l=0}^{p}\lambda_l \, P^m \mathcal{F}_l \in {\rm SOS}_{m,n}.$ So, $\sigma(x):=\langle \sum_{l=0}^{p}\lambda_l \, P^m \mathcal{F}_l,x^{\otimes m}\rangle$ is a sum-of-squares polynomial on $\mathbb{R}^n$ with degree $m$. This together with Lemma \ref{lemma:1} implies that for all $z \in \mathbb{R}^n$
\[
 \sum_{l=0}^{p}  \langle \, \lambda_l\mathcal{F}_l,(P^Tz)^{\otimes m}\rangle = \sum_{l=0}^{p}\lambda_l  \langle \, P^m \mathcal{F}_l,z^{\otimes m}\rangle =\langle \sum_{l=0}^{p}\lambda_l \, P^m \mathcal{F}_l,z^{\otimes m}\rangle=\sigma(z).
\]
So, for all $x \in \mathbb{R}^n$
\[
\sum_{l=0}^{p}  \langle \, \lambda_l\mathcal{F}_l,x^{\otimes m}\rangle =\sigma((P^T)^{-1}x)
\]
is also a sum-of-squares polynomial on $\mathbb{R}^n$ with degree $m$. Thus, $\displaystyle \sum_{l=0}^{p}\lambda_l \, \mathcal{F}_l \in {\rm SOS}_{m,n}$, and hence the conclusion follows.
\end{proof}

In the matrix case, Theorem \ref{th:1} reduces to the following theorem of the alternative presented in [10] (see also [11]).
\begin{corollary}\label{cor:pp} {\bf (Matrix Cases)}
Let $A_0, A_1,\ldots,A_p$, $p \in \mathbb{N}$ be symmetric $(n
 \times n)$ matrices. Suppose that there exists a
nonsingular $(n \times n)$ matrix $Q$ such that $Q^TA_0Q, Q^TA_1Q,  \ldots,
Q^TA_pQ$ are all matrices with   non-positive off-diagonal elements.
Then  exactly one of the following statements holds:
\begin{itemize}
\item[{\rm (i)}] there exists $x \in \mathbb{R}^n$ such that   $x^TA_lx<0$, $l=0,1,\ldots,p$;
\item[{\rm (ii)}] $\displaystyle (\exists \lambda_l \ge 0, l=0,1,\ldots,p, \sum_{l=0}^p \lambda_l=1) \, (\sum_{l=0}^{p}\lambda_l A_l$ is positive semidefinite$)$. 
\end{itemize}
\end{corollary}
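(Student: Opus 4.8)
The plan is to derive Corollary \ref{cor:pp} directly from Theorem \ref{th:1} by specializing to the second-order case $m=2$, where everything reduces to the familiar matrix language. First I would identify each symmetric $(n\times n)$ matrix $A_l$ with the second-order symmetric tensor $\mathcal{F}_l$ whose entries are $(\mathcal{F}_l)_{i_1i_2}=(A_l)_{i_1i_2}$, so that $\langle \mathcal{F}_l, x^{\otimes 2}\rangle=x^TA_lx$ for every $x\in\mathbb{R}^n$. Next, setting $P:=Q^T$, a direct computation from the definition of $P^m\mathcal{A}$ shows that $P^2\mathcal{F}_l$ is the tensor associated to the matrix $PA_lP^T=Q^TA_lQ$. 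Moreover, since in the second-order case the index set $I$ consists exactly of the diagonal positions $(i,i)$, the statement that $P^2\mathcal{F}_l$ is essentially nonpositive is precisely the statement that $Q^TA_lQ$ has non-positive off-diagonal entries. Hence the hypothesis of Theorem \ref{th:1} is fulfilled with this choice of $P$ (note that $P=Q^T$ is nonsingular if and only if $Q$ is).

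Then I would invoke Theorem \ref{th:1}: exactly one of its two alternatives holds. Alternative (i) reads $\langle \mathcal{F}_l, x^{\otimes 2}\rangle<0$ for $l=0,1,\ldots,p$, which by the identification above is exactly $x^TA_lx<0$ for all $l$, i.e.\ statement (i) of the corollary. Alternative (ii) asserts the existence of $\lambda_l\ge 0$ with $\sum_{l=0}^p\lambda_l=1$ and $\sum_{l=0}^p\lambda_l\mathcal{F}_l\in{\rm SOS}_{2,n}$. It remains to translate this SOS membership: recall from the discussion following the definition of the PSD and SOS tensor cones that when $m=2$ one has ${\rm SOS}_{2,n}={\rm PSD}_{2,n}$ and both coincide with the cone of positive semidefinite matrices. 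Therefore $\sum_{l=0}^p\lambda_l\mathcal{F}_l\in{\rm SOS}_{2,n}$ is equivalent to the matrix $\sum_{l=0}^p\lambda_lA_l$ being positive semidefinite, which is statement (ii) of the corollary. Combining the two translations yields the claimed dichotomy.

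The argument is essentially a dictionary between the tensor and matrix languages, so there is no genuine analytic obstacle to overcome. The only point requiring care is the bookkeeping of the transpose: the tensor congruence $\mathcal{A}\mapsto P^m\mathcal{A}$ specializes in the second-order case to $A\mapsto PAP^T$ rather than $A\mapsto P^TAP$, which is why one must take $P=Q^T$ (equivalently $Q=P^T$) so that $P^2\mathcal{F}_l$ corresponds to $Q^TA_lQ$; one should also check that nonsingularity of $P$ and of $Q$ are equivalent, so that the regularity hypothesis of Theorem \ref{th:1} matches the one stated in the corollary.
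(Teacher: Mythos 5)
Your proposal is correct and follows essentially the same route as the paper: specialize Theorem \ref{th:1} to $m=2$, identify $\langle \mathcal{F}_l, x^{\otimes 2}\rangle$ with $x^TA_lx$, and use that ${\rm SOS}_{2,n}$ coincides with the positive semidefinite matrix cone. Your explicit bookkeeping that $P^2\mathcal{F}_l$ corresponds to $PA_lP^T$, so one must take $P=Q^T$ to recover the hypothesis on $Q^TA_lQ$, is a careful detail the paper's one-line proof leaves implicit, but it does not change the argument.
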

\begin{proof}
In the special case when $m=2$ (and so, $\mathcal{F}_l=F_l$ are  $(n \times n)$ symmetric matrices), we have
 $\langle \mathcal{F}_l, x^{\otimes m} \rangle=x^T F_l x$ and ${\rm SOS}_{m,n}$ collapses to the positive semi-definite matrix cone. So, the conclusion follows from the preceding theorem by letting the order $m=2$.
\end{proof}


We note that, in Corollary \ref{cor:pp}, the assumption ``there exists a
nonsingular $(n \times n)$ matrix $Q$ such that $Q^TA_0Q, Q^TA_1Q,  \ldots,
Q^TA_pQ$ are all matrices with   non-positive off-diagonal elements'' is superfluous when only two quadratic functions are involved (that is, $p=1$). This was explained in [10, Remark 2.3]. In this case,
Corollary \ref{cor:pp} reduces to Yuan's theorem of the alternative. Therefore, Theorem \ref{th:1} can  be regarded as
an extension of Yuan's theorem of the alternative to the symmetric tensor setting.

However, unlike the matrix cases, if the condition ``there exists a nonsingular $(n \times n)$ matrix $P$ such that $P^m\mathcal{F}_l$, $l=0,1,\ldots,p$, are all essentially nonpositive tensors`` is dropped,
the above tensor analogy of Yuan's theorem of the alternative can fail even in the case $p=1$.  We illustrate this fact by the following example.
\begin{example}
Let $f_M$ be the homogeneous
Motzkin polynomial, that is, $$f_M(x_1,x_2,x_3)=x_3^6 + x_1^2x_2^4 + x_1^4x_2^2-3x_1^2x_2^2x_3^2, \ x=(x_1,x_2,x_3) \in \mathbb{R}^3.$$ Let $f_0,f_1$ be polynomials with degree $6$ on $\mathbb{R}^4$ defined by $$f_0(x_1,x_2,x_3,x_4)=f_M(x_1,x_2,x_3) \mbox{ and }  f_1(x_1,x_2,x_3,x_4)=x_1^6+x_2^6+x_3^6-x_4^6.$$
Let $\mathcal{F}_i$  be the symmetric tensors
associated to $f_i$, $i=1,2$, in the sense that $f_i(x)=\langle \mathcal{F}_i, x^{\otimes 6}\rangle$, for all $x \in \mathbb{R}^4$. As the homogeneous Motzkin polynomial always takes non-negative value, one cannot find $x \in \mathbb{R}^4$ such that $\langle \mathcal{F}_i, x^{\otimes m} \rangle < 0, \, i=0,1$. This implies that statement {\rm (1)} in Theorem \ref{th:1} fails. We now see that statement {\rm (2)} in Theorem \ref{th:1} also fails. Suppose on the contrary that there exist $\lambda_0,\lambda_1 \ge 0$ with $\lambda_0+\lambda_1=1$ such that
$\lambda_0\mathcal{F}_0+\lambda_1 \mathcal{F}_1 \in {\rm SOS}_{6,4}$.
Note that $f_0$ does not depend on $x_4$ and $f_1(x_1,x_2,x_3,x_4) \rightarrow -\infty$ as $x_4\rightarrow \infty$ for fixed $x_1,x_2,x_3$. It follows that $\lambda_1=0$ $($and so, $\lambda_0=1$$)$. Hence, $\mathcal{F}_0  \in {\rm SOS}_{6,4}$. This contradicts the fact that the homogeneous
Motzkin polynomial is not a sum-of-squares polynomial. Therefore, for this example, statements {\rm (i)} and {\rm (ii)} in Theorem \ref{th:1} both  fail.
\end{example}

As a consequence, we now provide an extension of the homogeneous S-lemma as follows.
\begin{corollary}{\bf (Tensor Analogy of Homogeneous S-lemma)}\label{cor:S-lemma}
   Let $n,p \in \mathbb{N}$ and let $m$ be an even number. Let $\mathcal{F}_l$, $l=0,1,\ldots,p$, be $m$th-order $n$-dimensional symmetric tensors. Suppose that there exists a nonsingular matrix $P$
such that $P^m\mathcal{F}_l$, $l=0,1,\ldots,p$, are all essentially nonpositive tensors. Suppose that there exists $x_0 \in \mathbb{R}^n$ such that
$\langle \mathcal{F}_l, x_0^{\otimes m} \rangle < 0, \, l=1,\ldots,p$. Then, the following statements are equivalent:
\begin{itemize}
\item[{\rm (i)}] $\langle \mathcal{F}_l, x^{\otimes m} \rangle \le 0, \, l=1,\ldots,p \ \Rightarrow \ \langle \mathcal{F}_0, x^{\otimes m} \rangle \ge 0$;
\item[{\rm (ii)}] $\displaystyle (\exists \lambda_l \ge 0, l=1,\ldots,p) \, (\mathcal{F}_0 +\sum_{l=1}^{p}\lambda_l \mathcal{F}_l \in {\rm SOS}_{m,n})$.
\end{itemize}
\end{corollary}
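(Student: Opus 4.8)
The plan is to establish the two implications separately, obtaining the substantive one as a consequence of Theorem~\ref{th:1} (the tensor analogue of Yuan's alternative theorem) together with the assumed strictly feasible point $x_0$.

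\emph{Direction} (ii)$\Rightarrow$(i). This I expect to be immediate weak duality. Suppose $\lambda_l\ge 0$, $l=1,\ldots,p$, satisfy $\mathcal{G}:=\mathcal{F}_0+\sum_{l=1}^p\lambda_l\mathcal{F}_l\in{\rm SOS}_{m,n}$. Since ${\rm SOS}_{m,n}\subseteq{\rm PSD}_{m,n}$, we have $\langle\mathcal{G},x^{\otimes m}\rangle\ge 0$ for all $x\in\mathbb{R}^n$. If $x$ moreover satisfies $\langle\mathcal{F}_l,x^{\otimes m}\rangle\le 0$ for $l=1,\ldots,p$, then $\sum_{l=1}^p\lambda_l\langle\mathcal{F}_l,x^{\otimes m}\rangle\le 0$ and hence $\langle\mathcal{F}_0,x^{\otimes m}\rangle\ge\langle\mathcal{G},x^{\otimes m}\rangle\ge 0$, which is (i). Note this part uses neither the sign-structure hypothesis nor $x_0$.

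\emph{Direction} (i)$\Rightarrow$(ii). Assume (i). The first step is to notice that (i) forbids any $x\in\mathbb{R}^n$ with $\langle\mathcal{F}_l,x^{\otimes m}\rangle<0$ for \emph{all} $l=0,1,\ldots,p$, since such an $x$ would in particular satisfy $\langle\mathcal{F}_l,x^{\otimes m}\rangle\le 0$ for $l=1,\ldots,p$ together with $\langle\mathcal{F}_0,x^{\otimes m}\rangle<0$, contradicting (i). Thus statement (i) of Theorem~\ref{th:1}, applied to $\mathcal{F}_0,\ldots,\mathcal{F}_p$, fails. The hypothesis supplies a nonsingular $P$ making every $P^m\mathcal{F}_l$ essentially nonpositive, so Theorem~\ref{th:1} applies and its alternative must hold: there are $\mu_l\ge 0$, $l=0,\ldots,p$, with $\sum_{l=0}^p\mu_l=1$ and $\sum_{l=0}^p\mu_l\mathcal{F}_l\in{\rm SOS}_{m,n}$. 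It then remains to normalize this relation so that $\mathcal{F}_0$ carries coefficient one, which forces me to rule out $\mu_0=0$. This is exactly where $x_0$ is needed: if $\mu_0=0$ then $\sum_{l=1}^p\mu_l\mathcal{F}_l\in{\rm SOS}_{m,n}\subseteq{\rm PSD}_{m,n}$, so $\sum_{l=1}^p\mu_l\langle\mathcal{F}_l,x_0^{\otimes m}\rangle\ge 0$; but $\sum_{l=1}^p\mu_l=1>0$ and $\langle\mathcal{F}_l,x_0^{\otimes m}\rangle<0$ for each $l\ge 1$ makes this sum strictly negative, a contradiction. Hence $\mu_0>0$, and setting $\lambda_l:=\mu_l/\mu_0\ge 0$ for $l=1,\ldots,p$ gives, since ${\rm SOS}_{m,n}$ is a cone, $\mathcal{F}_0+\sum_{l=1}^p\lambda_l\mathcal{F}_l=\mu_0^{-1}\sum_{l=0}^p\mu_l\mathcal{F}_l\in{\rm SOS}_{m,n}$, which is (ii).

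I do not anticipate a deep obstacle: this follows the standard ``S-lemma $=$ alternative theorem $+$ Slater point'' pattern, with Theorem~\ref{th:1} (and behind it the hidden-convexity Proposition~\ref{prop:1} and the identity ${\rm PSD}_{m,n}\cap E_{m,n}={\rm SOS}_{m,n}\cap E_{m,n}$ of Proposition~\ref{prop:0.1}) carrying all the weight. The two points that are not purely formal are (a) reducing hypothesis (i) to the strict infeasibility of the system $\langle\mathcal{F}_l,x^{\otimes m}\rangle<0$, $l=0,\ldots,p$, so that Theorem~\ref{th:1} is applicable, and (b) the use of $x_0$ to keep the multiplier $\mu_0$ on $\mathcal{F}_0$ away from zero --- without strict feasibility $\mu_0$ may vanish and (ii) can genuinely fail.
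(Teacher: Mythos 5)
Your proposal is correct and follows essentially the same route as the paper: (ii)$\Rightarrow$(i) by weak duality via ${\rm SOS}_{m,n}\subseteq{\rm PSD}_{m,n}$, and (i)$\Rightarrow$(ii) by noting that (i) makes the strict system $\langle\mathcal{F}_l,x^{\otimes m}\rangle<0$, $l=0,\ldots,p$, infeasible, invoking Theorem~\ref{th:1}, and using the Slater point $x_0$ to show the multiplier on $\mathcal{F}_0$ is positive before normalizing. No gaps; this matches the paper's argument step for step.
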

\begin{proof}

[${\rm (ii)} \Rightarrow {\rm (i)}$] Suppose that statement {\rm (ii)} holds. Then, there exist $\lambda_l \ge 0$, $l=1,\ldots,p$ such that
\[
\mathcal{F}_0+\sum_{l=1}^{p}\lambda_l \mathcal{F}_l \in {\rm SOS}_{m,n} \subseteq {\rm PSD}_{m,n}.
\]
Let $x \in \mathbb{R}^n$ such that $\langle \mathcal{F}_l, x^{\otimes m} \rangle \le 0, \, l=1,\ldots,p$. Then,
\[
0 \le \langle \mathcal{F}_0+\sum_{l=1}^{p}\lambda_l \mathcal{F}_l,x^{\otimes m}\rangle =\langle \mathcal{F}_0,x^{\otimes m}\rangle +\sum_{l=1}^{p}\lambda_l \langle \mathcal{F}_l,x^{\otimes m}\rangle \le \langle \mathcal{F}_0,x^{\otimes m}\rangle.
\]
Thus, {\rm (i)} follows.

[${\rm (i)} \Rightarrow {\rm (ii)}$] Suppose that {\rm (i)} holds. Then, the following inequality system has no solution
$$(\exists x \in \mathbb{R}^n)\, (\langle \mathcal{F}_l, x^{\otimes m} \rangle < 0, \, l=0,1,\ldots,p).$$ Then, the preceding alternative
theorem implies that there exist $\bar \lambda_l \ge 0, l=0,1,\ldots,p$, with  $\displaystyle \sum_{l=0}^p \bar \lambda_l=1$ such that $$\sum_{l=0}^{p}\bar \lambda_l \mathcal{F}_l=\mathcal{A} \in {\rm SOS}_{m,n}. $$
 We now observe that $\bar \lambda_0>0$ (Otherwise, $\bar \lambda_0=0$ and so, $\sum_{l=1}^p \bar \lambda_l=1$ and $\sum_{l=1}^{p}\bar \lambda_l \mathcal{F}_l \in {\rm SOS}_{m,n}.$ This gives us that
\[
0 \le \langle \sum_{l=1}^{p}\bar \lambda_l \mathcal{F}_l,x_0^{\otimes m}\rangle=\sum_{l=1}^{p}\bar \lambda_l \langle \mathcal{F}_l,x_0^{\otimes m}\rangle \le \max_{1 \le l \le p}\langle \mathcal{F}_l,x_0^{\otimes m}\rangle<0,
\]
which is impossible).  Let $\lambda_l=\bar \lambda_l/ \bar \lambda_0$, $l=1,\ldots,p$. Then, we have
$$\mathcal{F}_0 +\sum_{l=1}^{p}\lambda_l \mathcal{F}_l= (\bar \lambda_0)^{-1} \mathcal{A} \in {\rm SOS}_{m,n}.$$ Thus, the conclusion follows.
\end{proof}
\begin{remark}
{\rm Similar to the Yuan's theorem of the alternative, in the case when $m=2$ and $p=1$ (that is, inequality system involving two homogeneous quadratic functions), the above corollary collapses to the well-known homogeneous S-lemma (cf. [4])}.
\end{remark}

\section{Application: Polynomial Optimization with Essentially Nonpositive Coefficients}
In this section, as an application of our theorem of the alternative, we establish an exact conic programming relaxation result for polynomial optimization problems with essentially nonpositive coefficients. To do this, we first introduce the definition of polynomials with essentially nonpositive coefficients.
\begin{definition}{\bf (Polynomials with essentially nonpositive coefficients)}
Let $f$ be a polynomial on $\mathbb{R}^n$ with degree $m$. Let $r=f(0)$ be the constant term of $f$ and let $f_{m,i}$ be the coefficient associated with $x_i^{m}$.
Recall that \begin{equation}\label{eq:Omega_f}\Omega_f=\{\alpha=(\alpha_1,\ldots,\alpha_n) \in (\mathbb{N} \cup \{0\})^n: f_{\alpha} \neq 0 \mbox{ and } \alpha \neq m e_i, \ i=1,\ldots,n\},\end{equation}
where $e_i$ be the vector whose $i$th component is one and all the other components are zero. We note that $f$ can be written as
\[
f(x)=\sum_{i=1}^n f_{m,i} x_i^{m}+\sum_{\alpha \in \Omega_f \backslash \{0\}}f_{\alpha}x^{\alpha}+r.
\]
 We say $f$ has essentially nonpositive coefficients if $f_{\alpha} \le 0$ for all $\alpha\in \Omega_f \backslash \{0\}$.
\end{definition}
 Let $n \in \mathbb{N}$ and let $m$ be an even number. Consider the following nonconvex polynomial optimization problem with essentially nonpositive coefficients:
\begin{eqnarray*}
(P) \ \ \ \ \ \ \ \ \ \ \ \ \ \ \ \ \ \ \ \ \ \ \ & \displaystyle \min_{x \in \mathbb{R}^n}\{f_0(x):f_l(x) \le 0, \, l=1,\ldots,p\}.\ \ \ \ \ \ \ \ \ \ \ \ \ \ \ \ \ \ \ \ \ \ \ \ \ \ \ \ \  \ \ \ \ \ \ \ \ \ \ \ \ \ \ \ \ \ \ \ \ \ \ \ \ \ \ \ \ \ \ \ \ \ \ \ \ \ \ \ \ \ \ \ \ \ \ \ \ \ \ \ \
\end{eqnarray*}
where $f_l$, $l=0,1,\ldots,p$, are polynomials on $\mathbb{R}^n$ with essentially nonpositive coefficients and degree $m$. We use $\min(P)$ to denote the optimal value of problem (P). Throughout this section, we always assume that the feasible set of (P) is nonempty.

Below, we first establish that the optimal value of problem (P) can be found by a conic programming problem and the optimal solution of (P) can be recovered by a solution of the corresponding conic programming problem. To do this, we introduce the canonical homogenization of a polynomial and a conic programming problem as follows.

Consider the following conic programming problem
 \begin{eqnarray*}
(CP) \ \ \ \ \ \ \ \ \ \ \ \ \ \ \ \ \ \ \ \ \ \ \  &\min \{\langle \tilde{\mathcal{F}}_0, \mathcal{X} \rangle: \langle \tilde{\mathcal{F}}_l, \mathcal{X} \rangle \le 0, \ l=1,\ldots,p,\, \mathcal{X}_{n+1 \ldots n+1}=1,\, \mathcal{X} \in U_{m,n+1}\}. \ \ \ \ \ \ \ \ \ \ \ \ \ \ \ \ \ \ \ \ \ \ \ \ \ \ \ \ \  \ \ \ \ \ \ \ \ \ \ \ \ \ \ \ \ \ \ \ \ \ \ \ \ \ \ \ \ \ \ \ \ \ \ \ \ \ \ \ \ \ \ \ \ \ \ \ \ \ \ \ \
\end{eqnarray*}
where each $\tilde{\mathcal{F}}_l$, $l=0,1,\ldots,p$, is the symmetric tensor associated with the canonical homogenization of  $\tilde{f}_l$, that is, $\tilde{f}_l(\tilde{x})=\langle \tilde{\mathcal{F}}_l, \tilde{x}^{\otimes m}\rangle$ for any $\tilde{x}=(x^T,t)^T \in \mathbb{R}^{n+1}$.
\begin{lemma}
 Let $n \in \mathbb{N}$ and let $m$ be an even number. Let $f$ be a polynomial on $\mathbb{R}^n$ with essentially nonpositive  coefficients and
 degree $m$, and let $\tilde{f}$ be the canonical homogenization of $f$. Let $\tilde{\mathcal{F}}$ be the symmetric tensor associated with the canonical homogenization of  $\tilde{f}$, that is, $\tilde{f}(\tilde{x})=\langle \tilde{\mathcal{F}}, \tilde{x}^{\otimes m}\rangle$ for any $\tilde{x}=(x^T,t)^T \in \mathbb{R}^{n+1}.$ Then, $\tilde{\mathcal{F}}$ is an essentially nonpositive tensor.
\end{lemma}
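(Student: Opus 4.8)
The plan is to express the entries of $\tilde{\mathcal{F}}$ in terms of the coefficients of $f$ via the explicit form of the canonical homogenization, and then to verify the required sign condition by a short case analysis on multi-indices. First I would record the monomial form of $\tilde{f}$: writing $f(x)=\sum_{0\le|\alpha|\le m}f_\alpha x^\alpha$, the canonical homogenization is $\tilde{f}(x,t)=\sum_{0\le|\alpha|\le m}f_\alpha x^\alpha t^{m-|\alpha|}$, so each monomial $x^\alpha t^{m-|\alpha|}$ corresponds to the multi-index $\beta=(\alpha_1,\ldots,\alpha_n,m-|\alpha|)\in(\mathbb{N}\cup\{0\})^{n+1}$ of total degree $m$; conversely, every $\beta$ with $|\beta|=m$ arises this way from $\alpha:=(\beta_1,\ldots,\beta_n)$ (so that $|\alpha|=m-\beta_{n+1}\le m$), and $\beta\mapsto\alpha$ is a bijection between $\{\beta:|\beta|=m\}$ and $\{\alpha:0\le|\alpha|\le m\}$. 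Hence the coefficient of $\tilde{x}^\beta$ in $\tilde{f}$ equals $f_\alpha$, where $\alpha$ is the projection of $\beta$ onto its first $n$ coordinates.

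Next I would invoke the standard correspondence between a symmetric tensor and its associated homogeneous polynomial: from $\tilde{f}(\tilde{x})=\langle\tilde{\mathcal{F}},\tilde{x}^{\otimes m}\rangle$, expanding the inner product and collecting equal monomials gives, for any index tuple $(i_1,\ldots,i_m)$ of type $\beta$ (meaning the value $j$ occurs exactly $\beta_j$ times), $\tilde{\mathcal{F}}_{i_1\cdots i_m}=c_\beta\big/\binom{m}{\beta_1,\ldots,\beta_{n+1}}$, where $c_\beta$ is the coefficient of $\tilde{x}^\beta$ in $\tilde{f}$. In particular $\tilde{\mathcal{F}}_{i_1\cdots i_m}$ has the same sign as $c_\beta=f_\alpha$ with $\alpha=(\beta_1,\ldots,\beta_n)$.

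Then I would finish with a case analysis. Fix $(i_1,\ldots,i_m)$ with $\{i_1,\ldots,i_m\}\notin I$, where here $I=\{(i,\ldots,i)\in\mathbb{N}^m:1\le i\le n+1\}$, so its type $\beta$ is not equal to $m e_j$ for any $j\in\{1,\ldots,n+1\}$. Since $\beta\neq m e_{n+1}$ we have $\beta_{n+1}<m$, hence $|\alpha|=m-\beta_{n+1}\ge 1$ and so $\alpha\neq 0$; moreover $\alpha\neq m e_i$ for $i=1,\ldots,n$ (this is automatic when $|\alpha|<m$, while if $|\alpha|=m$ then $\alpha=m e_i$ would force $\beta=m e_i$, which is excluded). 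Therefore, if $f_\alpha=0$ then $\tilde{\mathcal{F}}_{i_1\cdots i_m}=0\le 0$; and if $f_\alpha\neq 0$ then $\alpha\in\Omega_f\backslash\{0\}$, so $f_\alpha\le 0$ because $f$ has essentially nonpositive coefficients, whence $\tilde{\mathcal{F}}_{i_1\cdots i_m}\le 0$. This shows that $\tilde{\mathcal{F}}$ is essentially nonpositive.

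The argument is otherwise routine; the only point needing care --- the nearest thing to an obstacle --- is the bookkeeping ensuring that the constant term $r=f(0)$ of $f$ is carried by the canonical homogenization onto the monomial $t^m$ alone, i.e.\ onto the single diagonal entry $\tilde{\mathcal{F}}_{n+1,\ldots,n+1}$, which is the one entry left unconstrained by essential nonpositivity, so that a possibly positive $r$ cannot spoil an off-diagonal entry.
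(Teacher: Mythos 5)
Your proposal is correct and follows essentially the same route as the paper: write out the homogenization $\tilde{f}(x,t)=\sum_{i=1}^n f_{m,i}x_i^m+\sum_{\alpha\in\Omega_f\backslash\{0\}}f_\alpha x^\alpha t^{m-|\alpha|}+r\,t^m$, observe that the constant $r$ lands only on the diagonal monomial $t^m$ and the nonpositive coefficients $f_\alpha$, $\alpha\in\Omega_f\backslash\{0\}$, account for all off-diagonal entries of $\tilde{\mathcal{F}}$. Your explicit bookkeeping (the bijection $\beta\mapsto\alpha$ and the formula $\tilde{\mathcal{F}}_{i_1\cdots i_m}=c_\beta/\binom{m}{\beta_1,\ldots,\beta_{n+1}}$) simply makes precise the tensor--polynomial correspondence that the paper invokes implicitly.
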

\begin{proof}
For each real polynomial $f$ with essentially nonpositive  coefficients, we can decompose it as
\[
f(x)=\sum_{i=1}^n f_{m,i} x_i^{m}+\sum_{\alpha \in \Omega_f \backslash \{0\}}f_{\alpha}\, x^{\alpha}+r,
\]
where $r=f(0)$ and $f_{\alpha} \le 0$ for all $\alpha \in \Omega_f \backslash \{0\}$.
Its canonical homogenization can be written as
\[
\tilde{f}(x,t)=\sum_{i=1}^n f_{m,i} x_i^{m}+\sum_{\alpha \in \Omega_f \backslash \{0\}}f_{\alpha}\, x^{\alpha}t^{m-|\alpha|}+r\, t^m \mbox{ for all } (x^T,t)^T \in \mathbb{R}^{n+1}.
\]
Recall that $\tilde{\mathcal{F}}$ is the symmetric tensor associated with the canonical homogenization of  $\tilde{f}$, that is, for all
$\tilde{x}=(x^T,t)^T \in \mathbb{R}^{n+1}$,
$$\tilde{f}(\tilde{x})=\langle \tilde{\mathcal{F}}, \tilde{x}^{\otimes m}\rangle=\sum_{i_1 \cdots i_m=1}^{n+1} \tilde{\mathcal{F}}_{i_1,\ldots,
i_m}\tilde{x}_{i_1}\ldots \tilde{x}_{i_m}  .$$
Note that each $m$-th order $(n+1)$-dimensional symmetric tensor uniquely corresponds a degree $m$ homogeneous polynomial on $\mathbb{R}^{n+1}$. As $f_{\alpha} \le 0$ for all $\alpha \in \Omega_f \backslash \{0\}$, it follows that
\[
\tilde{\mathcal{F}}_{i_1,\ldots,
i_m} \le 0 \mbox{ for all } (i_1,\ldots,i_m) \in I,
\]
where $I:=\{(i,i,\ldots,i) \in \mathbb{N}^m: 1 \le i \le n+1\}.$
Therefore, $\tilde{\mathcal{F}}$ is essentially nonpositive.
\end{proof}
\begin{theorem}{\bf (Exact Solutions via Conic Programs)}
 Let $n \in \mathbb{N}$ and let $m$ be an even number. Let $\tilde{\mathcal{F}_l}$ be the $m$th-order $(n+1)$-dimensional symmetric tensor associated with the canonical homogenization of  $\tilde{f}_l$, that is, $\tilde{f}_l(\tilde{x})=\langle \tilde{\mathcal{F}}_l, \tilde{x}^{\otimes m}\rangle$ for any $\tilde{x}=(x^T,t)^T \in \mathbb{R}^{n+1}.$ Consider the  nonconvex polynomial optimization problem with essentially nonpositive coefficients (P) and its associated conic relaxation problem (CP).  Then, we have $\min(P) = \min(CP).$
Moreover, for any solution $\bar{\mathcal{X}}$ of (CP), $$\bar x:=(\sqrt[m]{\bar{\mathcal{X}}_{1,\ldots,1}},\ldots,\sqrt[m]{\bar{\mathcal{X}}_{n,\ldots,n}}) \in \mathbb{R}^n$$ is a solution of (P).
\end{theorem}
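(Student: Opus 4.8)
The plan is to prove $\min(P) = \min(CP)$ by establishing the two inequalities $\min(P) \ge \min(CP)$ and $\min(P) \le \min(CP)$ separately, and then to show the recovery statement using Remark \ref{remark:2.1}. First I would observe that the homogenization identifies $f_l(x)$ with $\tilde f_l(x,1) = \langle \tilde{\mathcal F}_l, (x^T,1)^{\otimes m}\rangle$. For any feasible $x$ of (P), the rank-one tensor $\mathcal X := (x^T,1)^{\otimes m}$ lies in $U_{m,n+1}$, satisfies $\mathcal X_{n+1\ldots n+1}=1$, and satisfies $\langle \tilde{\mathcal F}_l, \mathcal X\rangle = f_l(x) \le 0$ for $l=1,\ldots,p$, with objective value $\langle \tilde{\mathcal F}_0,\mathcal X\rangle = f_0(x)$. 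Hence every feasible point of (P) lifts to a feasible point of (CP) with the same objective value, giving $\min(CP) \le \min(P)$.

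For the reverse inequality $\min(P) \le \min(CP)$, I would take any feasible $\mathcal X$ of (CP) and use the key inequality from Remark \ref{remark:2.1}. By the preceding lemma each $\tilde{\mathcal F}_l$ is essentially nonpositive, so with $\bar{\tilde x} := (\sqrt[m]{\mathcal X_{1,\ldots,1}},\ldots,\sqrt[m]{\mathcal X_{n,\ldots,n}},\sqrt[m]{\mathcal X_{n+1,\ldots,n+1}}) = (\bar x^T, 1)^T$ (using $\mathcal X_{n+1\ldots n+1}=1$), Remark \ref{remark:2.1} gives $\langle \tilde{\mathcal F}_l, \bar{\tilde x}^{\otimes m}\rangle \le \langle \tilde{\mathcal F}_l, \mathcal X\rangle$ for every $l = 0,1,\ldots,p$. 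Since $\langle \tilde{\mathcal F}_l, \bar{\tilde x}^{\otimes m}\rangle = \tilde f_l(\bar x, 1) = f_l(\bar x)$, we obtain $f_l(\bar x) \le \langle \tilde{\mathcal F}_l, \mathcal X\rangle \le 0$ for $l=1,\ldots,p$, so $\bar x$ is feasible for (P), and $f_0(\bar x) = \langle \tilde{\mathcal F}_0, \bar{\tilde x}^{\otimes m}\rangle \le \langle \tilde{\mathcal F}_0, \mathcal X\rangle$. Taking $\mathcal X$ to be an optimal solution $\bar{\mathcal X}$ of (CP), this simultaneously yields $\min(P) \le f_0(\bar x) \le \min(CP)$ and shows that the recovered point $\bar x$ is feasible for (P).

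Combining the two inequalities gives $\min(P) = \min(CP)$. For the final recovery claim, once equality of the optimal values is known, the chain $\min(P) \le f_0(\bar x) \le \langle \tilde{\mathcal F}_0, \bar{\mathcal X}\rangle = \min(CP) = \min(P)$ forces $f_0(\bar x) = \min(P)$, so $\bar x$ is indeed optimal for (P). The one technical point to be careful about is that $\bar x$ is well-defined as a real vector: this requires $\mathcal X_{i,\ldots,i} \ge 0$ for each $i$, which holds because $\mathcal X \in U_{m,n+1}$ is a nonnegative combination of tensors of the form $y^{\otimes m}$ whose diagonal entries are $y_i^m \ge 0$ (recall $m$ is even), so $\sqrt[m]{\mathcal X_{i,\ldots,i}}$ makes sense. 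The main (and only real) obstacle is recognizing that Remark \ref{remark:2.1} is exactly the tool that collapses the hidden-convexity gap between optimizing over $U_{m,n+1}$ and over rank-one tensors; everything else is bookkeeping with the homogenization.
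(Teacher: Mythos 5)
Your proposal is correct and follows essentially the same route as the paper: the inequality $\min(CP)\le\min(P)$ via lifting feasible points of (P) to rank-one tensors, the reverse inequality and the recovery of $\bar x$ via the key inequality of Remark \ref{remark:2.1} applied to the essentially nonpositive homogenized tensors. Your extra observation that the diagonal entries of $\mathcal{X}\in U_{m,n+1}$ are nonnegative (so the $m$-th roots are well defined) is a small point the paper leaves implicit, but otherwise the arguments coincide.
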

\begin{proof}
Let $\tilde{f}_l$ be the canonical homogenization of $f_l$, $l=0,1,\ldots,p$. Note that $\tilde{f}_l(x,1)=f_l(x)$ for all $x \in \mathbb{R}^n$ and $l=0,1,\ldots,p$. We first see that
\begin{eqnarray*}
\min(P) &= &  \min_{\tilde{x}=(x^T,t)^T \in \mathbb{R}^{n+1}}\{\tilde{f}_0(x,t): \tilde{f}_l(x,t) \le 0, \, l=1,\ldots,p, \, t=1\} \\
& = & \min_{\tilde{x}=(x^T,t)^T \in \mathbb{R}^{n+1} }\{\langle \tilde{\mathcal{F}}_0,\tilde{x}^{\otimes m}\rangle: \langle \tilde{\mathcal{F}}_l,\tilde{x}^{\otimes m}\rangle\le 0,\, l=1,\ldots,p, \, t=1\} \\
& \ge & \min \{\langle \tilde{\mathcal{F}}_0, \mathcal{X} \rangle: \langle \tilde{\mathcal{F}}_l, \mathcal{X} \rangle \le 0, \ l=1,\ldots,p, \, \mathcal{X}_{n+1,\ldots,n+1}=1,\, \mathcal{X} \in U_{m,n+1}\} \\
& =& \min(CP),
\end{eqnarray*}
where the inequality follows as
$\{\tilde{x}^{\otimes m}: \tilde{x}=(x^T,1)^T \in \mathbb{R}^{n+1}\} \subseteq \{\mathcal{X} \in U_{m,n+1}: \mathcal{X}_{n+1,\ldots,n+1}=1\}$.

On the other hand, let $\mathcal{X} \in U_{m,n+1}$ with $\langle \tilde{\mathcal{F}}_l, \mathcal{X} \rangle \le 0, \ l=1,\ldots,p$ and $\mathcal{X}_{n+1,\ldots,n+1}=1$.  Define $x=(\sqrt[m]{\mathcal{X}_{1,\ldots,1}},\ldots,\sqrt[m]{\mathcal{X}_{n,\ldots,n}})$ and $\tilde{x}=(x^T,1)^T$. As $\tilde{\mathcal{F}}_l$, $l=0,1,\ldots,p$, are all essentially nonpositive tensors, then Remark \ref{remark:2.1} implies that
\[
f_l(x)=\tilde{f}_l(\tilde{x})=\langle \tilde{\mathcal{F}}_l, \tilde{x}^{\otimes m} \rangle \le \langle \tilde{\mathcal{F}}_l, \mathcal{X} \rangle.
\]
This implies that, $f_l(x) \le 0$ for each $l=1,\ldots,p$ (and so, $x$ is feasible for (P)), and $f_0(x) \le \langle \tilde{\mathcal{F}}_0, \mathcal{X} \rangle.$ So,
$\min(P) \le \min(CP)$.
Thus, we see that $\min(P)=\min(CP).$

To see the last assertion, let $\bar{\mathcal{X}}$ be a solution of (CP) and let $$\bar x:=(\sqrt[m]{\bar{\mathcal{X}}_{1,\ldots,1}},\ldots,\sqrt[m]{\bar{\mathcal{X}}_{n,\ldots,n}}) \in \mathbb{R}^n.$$
Then, using similar argument as before, we have $f_l(\bar x) \le \langle \tilde{\mathcal{F}}_l, \bar{\mathcal{X}} \rangle$, $l=0,\ldots,p$. So, $\bar x$ is feasible for (P) and
$\min(P)=f_0(\bar x) \le \min(CP)$. Thus, the conclusion follows as $\min(P)=\min(CP).$
\end{proof}

It is worth noting that, in general, checking  the membership problem $\mathcal{X} \in U_{m,n+1}$ is, in general, an NP hard problem. Thus, solving the above conic programming problem is, in general, again a hard problem. This motivates us to examine an alternative tractable approach for solving nonconvex polynomial optimization problem with essentially non-positive  coefficients.

Below we show that the optimal value of the  nonconvex polynomial optimization problem with essentially non-positive  coefficients (P) can be
computed by the following sum-of-squares program:
\begin{eqnarray*}
(SOS)\ \ \  \max\{\mu &:& f_0+\sum_{l=1}^p \lambda_l f_l-\mu =\sigma_0,  \ \lambda_l \ge 0, \ l=1,\ldots,p, \ \sigma_0 \mbox{ is SOS},\ {\rm deg}\sigma_0 \le m\}. \ \ \ \ \ \ \ \ \ \ \ \ \ \ \ \ \ \ \ \ \ \ \ \ \ \ \ \ \  \ \ \ \ \ \ \ \ \ \ \ \ \ \ \ \ \ \ \ \ \ \ \ \ \ \ \ \ \ \ \ \ \ \ \ \ \ \ \ \ \ \ \ \ \ \ \ \ \ \ \ \ .
\end{eqnarray*}
We note that this problem can be regarded as the first level problem in the celebrated Lasserre hierarchy approximation of the general polynomial
 optimization problem. Moreover, the above sum-of-squares program can be equivalently reformulated as
a semidefinite programming problem. For details see the excellent surveys [38,39].

 \begin{theorem}{\bf (Exact Sums-of-Squares Relaxation)}\label{th:2}
  Let $n,p \in \mathbb{N}$ and let $m$ be an even number. Let $f_l$, $l=0,1,\ldots,p$, be polynomials on $\mathbb{R}^n$ with essentially nonpositive coefficients and degree $m$. Consider the  nonconvex polynomial optimization problem with essentially nonpositive  coefficients (P).
  Suppose that the
  strict feasibility condition holds, i.e., there exists $x_0 \in \mathbb{R}^n$ such that $f_l(x_0)<0$ for all $l=1,\ldots,p$.
 Then, we have
\begin{eqnarray*}
\min(P) = \max\{\mu &:& f_0+\sum_{l=1}^p \lambda_l f_l-\mu =\sigma_0, \\
 & & \lambda_l \ge 0, \ l=1,\ldots,p, \\
 & & \sigma_0 \mbox{ is SOS},\, {\rm deg}\sigma_0 \le m\},
\end{eqnarray*}
and the maximum in the sum-of-squares problem is attained.
 \end{theorem}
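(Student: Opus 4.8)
The plan is to prove the two bounds $\max(SOS)\le\min(P)$ and $\min(P)\le\max(SOS)$ separately. The first is a routine weak-duality computation: if $(\mu,\lambda,\sigma_0)$ is feasible for the sum-of-squares problem, then for every $x$ feasible for $(P)$ we have $f_0(x)-\mu=\sigma_0(x)-\sum_{l=1}^p\lambda_l f_l(x)\ge\sigma_0(x)\ge0$, so $f_0(x)\ge\mu$ and hence $\min(P)\ge\mu$; taking the supremum over feasible $\mu$ gives $\min(P)\ge\max(SOS)$. In particular, if $\min(P)=-\infty$ then the sum-of-squares problem is infeasible and both sides equal $-\infty$, so from now on I take $\mu^*:=\min(P)$ to be finite. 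It then suffices to produce $\lambda_l\ge0$, $l=1,\dots,p$, and an SOS polynomial $\sigma_0$ with $\deg\sigma_0\le m$ such that
\[
f_0+\sum_{l=1}^p\lambda_l f_l-\mu^*=\sigma_0 ,
\]
because then $(\mu^*,\lambda,\sigma_0)$ is feasible for $(SOS)$ and, with weak duality, $\max(SOS)=\mu^*=\min(P)$ with the maximum attained.

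To build this certificate I homogenize. Let $\tilde{f}_l$ be the canonical homogenizations on $\mathbb{R}^{n+1}$ and $\tilde{\mathcal{F}}_l$ the associated symmetric tensors, which are essentially nonpositive by the lemma preceding the ``Exact Solutions via Conic Programs'' theorem. Writing $\tilde{x}=(x^T,t)^T$ and letting $e_{n+1}$ denote the last standard basis vector of $\mathbb{R}^{n+1}$, put $\mathcal{G}_0:=\tilde{\mathcal{F}}_0-\mu^*\,(e_{n+1})^{\otimes m}$, so that $\langle\mathcal{G}_0,\tilde{x}^{\otimes m}\rangle=\tilde{f}_0(x,t)-\mu^*t^m$; since $(e_{n+1})^{\otimes m}$ is nonzero only in the diagonal entry $(n+1,\dots,n+1)$, the tensor $\mathcal{G}_0$ is again essentially nonpositive. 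I then apply the tensor S-lemma, Corollary \ref{cor:S-lemma}, with $P=I_{n+1}$, to the tensors $\mathcal{G}_0,\tilde{\mathcal{F}}_1,\dots,\tilde{\mathcal{F}}_p$; its strict-feasibility hypothesis holds since $\langle\tilde{\mathcal{F}}_l,(x_0^T,1)^{\otimes m}\rangle=f_l(x_0)<0$ for $l=1,\dots,p$. If statement (i) of that corollary holds here, namely that $\langle\tilde{\mathcal{F}}_l,\tilde{x}^{\otimes m}\rangle\le0$ for $l=1,\dots,p$ implies $\langle\mathcal{G}_0,\tilde{x}^{\otimes m}\rangle\ge0$, then statement (ii) produces $\lambda_l\ge0$ with $\mathcal{G}_0+\sum_{l=1}^p\lambda_l\tilde{\mathcal{F}}_l\in{\rm SOS}_{m,n+1}$; dehomogenizing at $t=1$ (the dehomogenization of a homogeneous SOS form is SOS of at most the same degree) then yields precisely the displayed identity, with $\sigma_0$ an SOS polynomial of degree at most $m$. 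So the entire proof reduces to verifying statement (i).

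The verification of (i) splits according to whether $t=0$. For $t\neq0$, since $m$ is even we have $\tilde{x}^{\otimes m}=|t|^m\,(y^T,1)^{\otimes m}$ with $y:=x/t$, so $\langle\tilde{\mathcal{F}}_l,\tilde{x}^{\otimes m}\rangle\le0$ forces $f_l(y)\le0$ for $l=1,\dots,p$; hence $y$ is feasible for $(P)$, $f_0(y)\ge\mu^*$, and $\langle\mathcal{G}_0,\tilde{x}^{\otimes m}\rangle=|t|^m\big(f_0(y)-\mu^*\big)\ge0$. The substantive case is $t=0$, where we must show that $\tilde{f}_0(d,0)\ge0$ for every $d\in\mathbb{R}^n$ with $\tilde{f}_l(d,0)\le0$, $l=1,\dots,p$. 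Here I would invoke the already-established identity $\min(P)=\min(CP)$: for $s\ge0$ the tensor $\mathcal{X}_s:=(x_0^T,1)^{\otimes m}+s\,(d^T,0)^{\otimes m}$ lies in the convex cone $U_{m,n+1}$, has $(\mathcal{X}_s)_{n+1,\dots,n+1}=1$, and satisfies $\langle\tilde{\mathcal{F}}_l,\mathcal{X}_s\rangle=f_l(x_0)+s\,\tilde{f}_l(d,0)\le f_l(x_0)<0$ for $l=1,\dots,p$; thus $\mathcal{X}_s$ is feasible for $(CP)$, so $f_0(x_0)+s\,\tilde{f}_0(d,0)=\langle\tilde{\mathcal{F}}_0,\mathcal{X}_s\rangle\ge\min(CP)=\mu^*$ for all $s\ge0$, and letting $s\to\infty$ forces $\tilde{f}_0(d,0)\ge0$ since $\mu^*$ is finite.

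I expect the $t=0$ case to be the only genuine obstacle: it is exactly where essential nonpositivity must be exploited---through the hidden convexity packaged in the conic reformulation $(CP)$---and a naive argument using only points of $(P)$ breaks down, because a recession direction $d$ with some $\tilde{f}_l(d,0)=0$ need not be a limit of strictly feasible directions. All the remaining ingredients, namely weak duality, the rescaling argument for $t\neq0$, and the bookkeeping of homogenization, dehomogenization, and the degree bound, are straightforward.
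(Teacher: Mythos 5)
Your proposal is correct, but it takes a noticeably different route from the paper's proof, and the difference is worth spelling out. The paper never needs the non-strict implication you verify: it works directly with the \emph{strict} homogenized system $\tilde f_l(x,t)<0$ ($l=1,\dots,p$), $\tilde f_0(x,t)-\gamma t^m<0$, whose infeasibility is elementary --- in the $t=0$ case all inequalities are strict, so scaling the single point $\mu\bar x$ immediately produces feasible points of $(P)$ with value below $\gamma$, a contradiction. It then applies Theorem \ref{th:1} directly, obtains multipliers with $\sum_{l=0}^p\lambda_l=1$, dehomogenizes, and finally uses the strict feasibility of $x_0$ to show $\lambda_0>0$ before normalizing. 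You instead invoke Corollary \ref{cor:S-lemma} with the tensors $\tilde{\mathcal F}_0-\mu^*(e_{n+1})^{\otimes m},\tilde{\mathcal F}_1,\dots,\tilde{\mathcal F}_p$, which packages the $\lambda_0>0$ normalization inside the corollary but forces you to verify its non-strict premise (i); your $t\neq 0$ rescaling and your $t=0$ recession argument via $\mathcal X_s=(x_0^T,1)^{\otimes m}+s(d^T,0)^{\otimes m}$ and the exact conic relaxation theorem $\min(P)=\min(CP)$ are both sound (the essential nonpositivity enters through that theorem's hidden-convexity inequality, exactly as you say, and all hypotheses of Corollary \ref{cor:S-lemma} are met with $P=I_{n+1}$). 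The trade-off: the paper's argument is more self-contained at this point --- it needs neither the conic exactness result nor any analysis of recession directions with equalities --- whereas your version buys a cleaner reduction (weak duality plus one application of the S-lemma) at the cost of proving a genuinely stronger intermediate fact, namely that no recession direction $d$ with $\tilde f_l(d,0)\le 0$ for all $l\ge 1$ can have $\tilde f_0(d,0)<0$; your identification of that step as the crux, and your handling of it, are exactly right.
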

 \begin{proof}
We first observe that  \begin{eqnarray*}
\min(P) \ge \max\{\mu &:& f_0+\sum_{l=1}^p \lambda_l f_l-\mu =\sigma_0, \\
 & & \lambda_l \ge 0, \ l=1,\ldots,p, \\
 & & \sigma_0 \mbox{ is SOS},\, {\rm deg}\sigma_0 \le m\},
\end{eqnarray*}
always holds. To see the reverse inequality and the attainment, we can assume that $\min(P)>-\infty$. As the feasible set of (P) is nonempty, $\gamma:=\min(P) \in \mathbb{R}$. This implies that the following strict inequality system has no solution::
\[
x \in \mathbb{R}^n,\ f_l(x) < 0, \ l=1,\ldots,p \mbox{ and } f_0(x)-\gamma < 0.
\]
Let $\tilde{f}_l$ be the canonical homogenization of $f_l$, $l=0,1,\ldots,p$. From the definition of canonical homogenization,
$\tilde{f}_l(x,1)=f_l(x)$ for all $x \in \mathbb{R}^n$, $l=0,1,\ldots,p$.
We now see that the following strict homogeneous inequality system also has no solution:
\begin{equation}\label{eq:claim_homo}
(x^T,t)^T \in \mathbb{R}^{n+1}, \ \tilde{f}_l(x,t)< 0, \ l=1,\ldots,p \mbox{ and } \tilde{f}_0(x,t)-\gamma t^m < 0.
\end{equation}
Suppose on the contrary that there exists
$(\bar x^T,\bar t \, )^T \in \mathbb{R}^{n+1}$ such that $$\tilde{f}_l(\bar x,\bar t\, ) < 0, \ l=1,\ldots,p \mbox{ and }\tilde{f}_0(\bar x,\bar t)-\gamma \bar t^m < 0.$$
If $\bar t \neq 0$, then we have
\[ f_l(\frac{\bar x}{\bar t})=\tilde{f}_l(\frac{\bar x}{\bar t},1)=\frac{\tilde{f}_l(\bar x,\bar t)}{\bar t^m}< 0, \, l=1,\ldots,p \, \mbox{ and } \,  f_0(\frac{\bar x}{\bar t})-\gamma = \tilde{f}_0(\frac{\bar x}{\bar t},1)-\gamma=\frac{\tilde{f}_0(\bar x,\bar t)-\gamma \bar t^m}{\bar t^m}  < 0.\]
This makes contradiction. Now, if $\bar t=0$, then we have
\[
\tilde{f}_l(\bar x,0) < 0, \ l=1,\ldots,p \mbox{ and }\tilde{f}_0(\bar x,0) < 0.
\]
This implies that  $\lim_{\mu \rightarrow +\infty}f_l(\mu \bar x)=-\infty$, $l=0,1,\ldots,p,$ and so, for all large $\mu>0$,
\[
{f}_l(\mu \bar x) < 0, \ l=1,\ldots,p \mbox{ and } {f}_0(\mu \bar x) < \gamma.
\]
This also makes contradiction, and hence the claim (\ref{eq:claim_homo}) follows.

Letting $\mathcal{E}_{n+1}$ be the $m$th-order $(n+1)$-dimensional symmetric tensor such that $\langle \mathcal{E}_{n+1}, \tilde{x}^{\otimes m}\rangle =t^m$ for $\tilde{x}=(x^T,t)^T \in \mathbb{R}^{n+1}$ and noting that $\tilde{f}_l(\tilde{x})=\langle \tilde{\mathcal{F}}_l, \tilde{x}^{\otimes m}\rangle$ for any $\tilde{x}=(x^T,t)^T \in \mathbb{R}^{n+1}$, (\ref{eq:claim_homo}) gives us that the following system has no solution
\[
\langle \tilde{\mathcal{F}}_l, \tilde{x}^{\otimes m} \rangle < 0, \, l=1,\ldots,p \ \mbox{and} \ \langle \tilde{\mathcal{F}}_0-\gamma \mathcal{E}_{n+1}, \tilde{x}^{\otimes m} \rangle < 0.
\]
Note that $\mathcal{F}_l$, $l=0,1,\ldots,p$, are all essentially nonpositive tensors (and so, $\tilde{\mathcal{F}}_0-\gamma \mathcal{E}_{n+1}$ is also essentially nonpositive). Then, Theorem \ref{th:1} implies that there exist $\lambda_l \ge 0$, $l=0,1,\ldots,p$, such that $\sum_{l=0}^p\lambda_l=1$ and
\[
\lambda_0(\tilde{\mathcal{F}}_0-\gamma \mathcal{E}_{n+1})+\sum_{l=1}^p \lambda_l\tilde{\mathcal{F}}_l \in {\rm SOS}_{m,n+1}.
\]
This shows that
\begin{equation}\label{eq:ppq}
\tilde{\sigma}_0(x,t):=\langle \lambda_0(\tilde{\mathcal{F}}_0-\gamma \mathcal{E}_{n+1})+\sum_{l=1}^p \lambda_l\tilde{\mathcal{F}}_l, \tilde{x}^{\otimes m}\rangle=\lambda_0(\tilde{f}_0(x,t)-\gamma t^m) +\sum_{l=1}^p \lambda_l\tilde{f}_l(x,t)
\end{equation}
is a sum-of-squares polynomial with degree $m$. Letting $t=1$ in (\ref{eq:ppq}), it follows that
\begin{equation}\label{eq:final}
\lambda_0(f_0(x)-\gamma) +\sum_{l=1}^p \lambda_lf_l(x)=\tilde{\sigma}_0(x,1)
\end{equation}
is a sum-of-squares polynomial with degree $m$. We now show that $\lambda_0>0$. Indeed, if $\lambda_0=0$, then $\sum_{l=1}^p\lambda_l=1$ and
\[
\sum_{l=1}^p \lambda_lf_l(x)=\tilde{\sigma}_0(x,1) \ge 0\mbox{ for all } x \in \mathbb{R}^n.
\]
Thus, the strict feasibility condition implies that $\lambda_l=0$, $l=1,\ldots,p$. This contradicts the fact that $\sum_{l=1}^p\lambda_l=1$, and so, $\lambda_0>0$.  Dividing $\lambda_0$ on both sides of (\ref{eq:final}) shows that
\[
f_0(x)-\gamma +\sum_{l=1}^p \frac{\lambda_l}{\lambda_0}f_l(x)=\frac{\tilde{\sigma}_0(x,1)}{\lambda_0},
\]
is a sum-of-squares polynomial with degree $m$,
and so,
\begin{eqnarray*}
\min(P)=\gamma \le \max\{\mu &:& f_0+\sum_{l=1}^p \lambda_l f_l-\mu =\sigma_0, \\
 & & \lambda_l \ge 0, \ l=1,\ldots,p, \\
 & & \sigma_0 \mbox{ is SOS},\, {\rm deg}\sigma_0 \le m\}.
\end{eqnarray*}
Thus, the conclusion follows.
 \end{proof}
\begin{remark} {\bf (Connection to the existing result in polynomial optimization)}
{\rm It is known that the optimal value of a general nonconvex polynomial optimization problem can be approximated by a sequence of semidefinite programming problem under the so-called Archimedean assumption. We note that the Archimedean assumption implies the feasible set of the nonconvex polynomial optimization problem must be compact. This sequence of semidefinite programming problem is now often referred as Lasserre hierarchy and becomes one of the important and popular tools in solving a general polynomial optimization problem with compact feasible sets. For excellent
survey see [37,38,46]. 
It is worth noting that, if we have some prior knowledge about a solution $x^*$ (say $\|x^*\| \le R$ for some $R>0$), one can
impose an additional constraint $\|x\|^2 \le R^2$ and convert the problem into an optimization problem with compact feasible set. In this case, a global solution can be found by
using this big ball approach as long as we have some prior knowledge about a solution. Moreover, there are also some other approaches for solving polynomial optimization problems with unbounded feasible sets
by exploiting gradient ideals of the underlying problem (for example, see
[48-50]). 

On the other hand, Theorem \ref{th:2} shows that the optimal value of a nonconvex polynomial optimization problem with essentially nonpositive coefficients can be found by solving the first level problem in the Lasserre hierarchy approximation under the strict feasibility condition. Interestingly, Theorem \ref{th:2} allows the feasible set to be non-compact (see
Example \ref{ex:2}) without having prior knowledge of the solution $x^*$.}
\end{remark}
As a corollary, we show that the nonconvex polynomial optimization problem with generalized $l^m$-type constraints enjoys exact sum-of-squares relaxation whenever the objective function has essentially nonpositive coefficients.

\begin{corollary}{\bf (Exact Sums-of-Squares Relaxation for generalized $l^m$ constraints)} \label{cor:1}
Let $m$ be an even number and let $n \in \mathbb{N}$. Let $f_0$ be a polynomial on $\mathbb{R}^n$ with essentially nonpositive coefficients and degree $m$. Consider the following nonconvex polynomial optimization problem with generalized $l^m$-type constraint:
\begin{eqnarray*}
(P_{l^m}) \ \ \ \ \ \ \ \ \ \ \ \ \ \ \ \ \ \ \ \ \ \ \ \ \ \ \ \ & \displaystyle \min_{x \in \mathbb{R}^n} \{f_0(x):  \sum_{i=1}^na_i x_i^m \le 1\},  \ \ \ \ \ \ \ \ \ \ \ \ \ \ \ \ \ \ \ \ \ \ \ \ \ \ \ \ \  \ \ \ \ \ \ \ \ \ \ \ \ \ \ \ \ \ \ \ \ \ \ \ \ \ \ \ \ \ \ \ \ \ \ \ \ \ \ \ \ \ \ \ \ \ \ \ \ \ \ \ \
\end{eqnarray*}
where $a_i \in \mathbb{R}$, $i=1,\ldots,n$. Let $f_1(x)=\sum_{i=1}^na_i x_i^m-1$.
 Then, we have
\begin{eqnarray*}
\min(P_{l^m}) = \max\{\mu &:& f_0+\lambda f_1 -\mu =\sigma_0, \\
 & & \lambda \ge 0, \mu \in \mathbb{R} \\
 & & \sigma_0 \mbox{ is SOS},\, {\rm deg}\sigma_0 \le m\},
\end{eqnarray*}
and the maximum in the sum-of-squares problem is attained.
\end{corollary}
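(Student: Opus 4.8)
The plan is to recognize $(P_{l^m})$ as the instance of the general problem $(P)$ with $p=1$ and single constraint function $f_1(x)=\sum_{i=1}^n a_i x_i^m-1$, and then to invoke Theorem \ref{th:2} directly. The only work is to check that the hypotheses of Theorem \ref{th:2} are met here, namely: (a) $f_0$ and $f_1$ both have essentially nonpositive coefficients and degree $m$; (b) the feasible set of $(P_{l^m})$ is nonempty; and (c) the strict feasibility condition holds.

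First I would verify (a). The polynomial $f_1(x)=\sum_{i=1}^n a_i x_i^m-1$ has constant term $-1$, coefficient $a_i$ on the monomial $x_i^m$, and no further monomials, so $\Omega_{f_1}\setminus\{0\}=\emptyset$. Hence the defining condition ``$f_\alpha\le 0$ for all $\alpha\in\Omega_{f_1}\setminus\{0\}$'' is vacuously satisfied, i.e.\ $f_1$ has essentially nonpositive coefficients; and, provided not all $a_i$ vanish, $f_1$ has degree $m$. (If all $a_i=0$ the constraint reads $-1\le 0$ and is redundant, so $(P_{l^m})$ becomes an unconstrained problem, which is $(P)$ with $p=0$, and the same argument below goes through with an empty constraint list.) Together with the hypothesis on $f_0$, this shows that $(P_{l^m})$ is precisely $(P)$ with $p=1$.

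Next I would handle (b) and (c) at once by testing the origin: $f_1(0)=-1<0$. So $0$ is a strictly feasible point, which shows both that the feasible set of $(P_{l^m})$ is nonempty and that the strict feasibility condition of Theorem \ref{th:2} holds (with $x_0=0$). Observe that no sign restriction is placed on the $a_i$, so the feasible set of $(P_{l^m})$ may well be unbounded; this causes no difficulty, since Theorem \ref{th:2} does not require compactness.

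With all hypotheses in place, the conclusion follows at once from Theorem \ref{th:2} applied with $p=1$: $\min(P_{l^m})$ coincides with the optimal value of the displayed sum-of-squares program, and that value is attained. There is no substantive obstacle in the argument; the only point worth noticing is that the $l^m$-type constraint is \emph{automatically} compatible with the framework of Theorem \ref{th:2} — the essentially-nonpositive-coefficients requirement for $f_1$ holds vacuously because the constraint has no cross terms, and Slater's condition holds for free at $x_0=0$.
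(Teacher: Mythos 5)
Your proposal is correct and follows essentially the same route as the paper: verify strict feasibility at $x_0=0$ via $f_1(0)=-1<0$ and then apply Theorem \ref{th:2} with $p=1$. Your extra remarks (that $f_1$ has essentially nonpositive coefficients vacuously, and the degenerate case $a_i\equiv 0$) are careful details the paper leaves implicit but do not change the argument.
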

\begin{proof}
Clearly, $f_1(0)=-1$ and so, the strict feasibility condition is satisfied for $(P_{l^m})$. So, the conclusion follows from the preceding theorem.
\end{proof}

\begin{remark}{\bf (Further links to the existing literature)}  {\rm Below, we compare the preceding corollary with some known results in the literature.
\begin{itemize}
\item[{\rm (1)}] We first discuss the relationship of problem $(P_{l^m})$ and the positive-definiteness problem of a symmetric tensor. Let $\mathcal{A}$ be a symmetric tensor and let $f_{\mathcal{A}}(x)=\langle \mathcal{A},x^{\otimes m}\rangle$. We say the tensor $\mathcal{A}$ is positive definite if $f_{\mathcal{A}}(x)>0$ for all $x \in \mathbb{R}^n \backslash \{0\}$. This is equivalent to the fact that the optimal value of the following polynomial optimization problem is positive:
\begin{eqnarray*}
\displaystyle \min_{x \in \mathbb{R}^n} \{ f_{\mathcal{A}}(x) :\sum_{i=1}^n x_i^m \le 1\}.
\end{eqnarray*}
Note that this is a special case of $P_{l^m}$ with homogeneous objective function and $a_i=1$. So, the preceding corollary shows that
the positive definiteness of an essentially non-positive tensor can be tested by solving a sum-of-squares programming problem.  This result has been established very recently
 in [31].
\item[{\rm (2)}] Recently, the nonconvex polynomial optimization problem with generalized $l^m$-type constraint was studied in [51] and a geometric programming relaxation problem was proposed to calculate the lower bound of the optimal value of the problem (see also [52]). It was demonstrated that the lower bound provided by the geometric programming relaxation is a lower bound of the sum-of-squares relaxation and can be more efficient from the computational point of view  comparing
to the sum-of-squares relaxation. At this moment, it is not clear for us whether the geometric programming relaxation is indeed exact in the case when the objective function is a polynomial with essentially nonpositive coefficients. This would be an interesting research question for our further study.
\end{itemize}}

\end{remark}

Before we end this section, we provide an example verifying Theorem  \ref{th:2} and Corollary \ref{cor:1}.
\begin{example}\label{ex:3}
 Let $f_0$ be a  homogeneous polynomial on $\mathbb{R}^3$ with degree $6$ defined by $$f_0(x_1,x_2,x_3)=x_1^6+x_2^6+x_3^6-(x_1^2(x_2^4+x_3^4)+x_2^2(x_1^4+x_3^4)+x_3^2(x_1^4+x_2^4)).$$
 Clearly $f$ is a homogeneous polynomial with essentially non-positive coefficients. Consider the homogeneous polynomial optimization problem \begin{eqnarray*}
(EP3) \ \ \ \ \ \ \ \ \ \ \ \ \ \  \ \ \ \ \  \ \ \ \ & \displaystyle \min_{x \in \mathbb{R}^3}\{f_0(x): x_1^6+x_2^6+x_3^6 \le 1\} \ \ \ \ \ \ \ \ \ \ \ \ \ \ \ \ \ \ \ \ \ \ \ \ \ \ \ \ \  \ \ \ \ \ \ \ \ \ \ \ \ \ \ \ \ \ \ \ \ \ \ \ \ \ \ \ \ \ \ \ \ \ \ \ \ \ \ \ \ \ \ \ \ \ \ \ \ \ \ \ \ .
\end{eqnarray*}
Let $f_1(x)=x_1^6+x_2^6+x_3^6-1$. Then, the corresponding sum-of-squares relaxation is given by
\begin{eqnarray*}
(REP3) & \ \ \ \ \ \ \ \ \ \ \ \ \ \ \ \ \ \ & \max_{\lambda \ge 0, \mu \in \mathbb{R}}\{\mu: f_0+\lambda f_1-\mu=\sigma_0, \, \sigma_0 \mbox{ is SOS and } {\rm deg} \sigma_0 \le 6 \} \ \ \ \ \ \ \ \ \ \ \ \ \ \ \ \ \ \ \ \ \ \ \ \ \ \ \ \ \  \ \ \ \ \ \ \ \ \ \ \ \ \ \ \ \ \ \ \ \ \ \ \ \ \ \ \ \ \ \ \ \ \ \ \ \ \ \ \ \ \ \ \ \ \ \ \ \ \ \ \ \ 
\end{eqnarray*}
Solving the sum of squares programming problem (REP3) via YALMIP {\rm (see [53,54])} gives us that $\min(REP3)=-1$.

On the other hand, note that the following Robinson polynomial {\rm (cf. [41])}
\[
f_R(x)=x_1^6+x_2^6+x_3^6-(x_1^2(x_2^4+x_3^4)+x_2^2(x_1^4+x_3^4)+x_3^2(x_1^4+x_2^4))+3x_1^2x_2^2x_3^2
\]
is always non-negative, and $f_R(x)=f_0(x)+3x_1^2x_2^2x_3^2$. This implies that, for all $(x_1,x_2,x_3)$ with $x_1^6+x_2^6+x_3^6 \le 1$,
\[
f_0(x)  \ge -3x_1^2x_2^2x_3^2 \ge -3 (\frac{(x_1^2)^3+(x_2^2)^3+(x_3^2)^3}{3}) \ge -1,
\]
where the second inequality follows by the inequality of arithmetic and geometric means.
Moreover, note that $(\bar x_1,\bar x_2,\bar x_3)=(\sqrt[6]{\frac{1}{3}},\sqrt[6]{\frac{1}{3}},\sqrt[6]{\frac{1}{3}})$ satisfies $\bar x_1^6+\bar x_2^6+\bar x_3^6 \le 1$ and
$f_0(\bar x)=-1$. So, $\min(EP3)=-1$. This verifies the sum-of-squares relaxation is exact for this example.
\end{example}
\subsection{Examples}
Below, we present a few numerical examples. The first example shows that the conclusion of Theorem \ref{th:2} can fail if a polynomial optimization problem does not have  essentially nonpositive coefficients. The second example illustrate that Theorem \ref{th:2} can be applied to a polynomial optimization problem with possibly non-compact feasible set.
\begin{example}\label{ex:1} {\bf (Importance of the assumption on essentially nonpositive coefficients)}
Let $f_M$ be the homogeneous Motzkin polynomial
\[f_M(x)=x_3^6 + x_1^2x_2^4 + x_1^4x_2^2-3x_1^2x_2^2x_3^2.
\]
It is clear that $f_M$ is not a polynomial with essentially nonpositive coefficients. It is known that $f_M$ is a  polynomial which takes non-negative values but is not a sum-of-squares polynomial {\rm [41]}.
Consider the following polynomial optimization problem
\begin{eqnarray*}
(EP1)  \ \ \ \ \  \ \ \ \ \  \ \ \ \ \  & \displaystyle
\min_{x \in \mathbb{R}^3} \{f_M(x): x_1^6+x_2^6+x_3^6 \le 1\}  \ \ \ \ \   \ \ \ \ \   \ \ \ \ \   \ \ \ \ \   \ \ \ \ \   \ \ \ \ \  \ \ \ \ \   \ \ \ \ \   \ \ \ \ \  \ \ \ \ \  \ \ \ \ \  \ \ \ \ \  .
\end{eqnarray*}
Clearly,  $\min(EP1)=0$ (as $f_M$ takes non-negative value).
Let $f_1(x)=x_1^6+x_2^6+x_3^6-1$.
The sum-of-squares relaxation of (EP1) takes the form
\begin{eqnarray*}
 (REP1) & \ \ \ \ \   \ \ \ \ \     & \max_{\lambda \ge 0, \mu \in \mathbb{R}}\{\mu: f_M+\lambda f_1-\mu=\sigma_0, \, \sigma_0 \mbox{ is SOS and } {\rm deg} \sigma_0 \le 6\}  \ \ \ \ \   \ \ \ \ \   \ \ \ \ \   \ \ \ \ \   \ \ \ \ \   \ \ \ \ \  \ \ \ \ \   \ \ \ \ \   \ \ \ \ \  \ \ \ \ \  \ \ \ \ \  \ \ \ \ \   \ \ \ \ \   \ \ \ \ \   \ \ \ \ \   \ \ \ \ \   \ \ \ \ \   \ \ \ \ \  \ \ \ \ \   \ \ \ \ \   \ \ \ \ \  \ \ \ \ \  \ \ \ \ \  \ \ \ \ \ 
\end{eqnarray*}
We now show that the conclusion of Theorem \ref{th:2} fails. To see this, we suppose on the contrary that there exist $\lambda \ge 0$ and a  sum-of-squares polynomial $\sigma_0$ with degree at most $6$ such that
$f_M+\lambda f_1=\sigma_0$.   Note that $0 \le \sigma_0(0)=f_M(0)+\lambda f_1(0)=-\lambda$. This together with $\lambda \ge 0$ implies that $\lambda=0$, and so, $f_M=\sigma_0$ which is a sum-of-squares polynomial. This contradicts the fact that $f_M$ is not a sum-of-squares polynomial. Thus the conclusion of Theorem \ref{th:2} fails.
%
\end{example}

\begin{example}\label{ex:2}{\bf (An example with noncompact feasible set)}
Let $f_0$ be a  homogeneous polynomial on $\mathbb{R}^3$ with degree $4$ defined by $f_0(x_1,x_2,x_3)=x_1^4+x_2^4+x_3^4-4x_1x_3^3$. Clearly, $f$ is a homogeneous polynomial with essentially non-positive coefficients. Consider the homogeneous polynomial optimization problem \begin{eqnarray*}
(EP2)  \ \ \ \ \   \ \ \ \ \  & \displaystyle \min_{x \in \mathbb{R}^3} \{f_0(x): x_1^4-\frac{1}{2}x_2^4+x_3^4 \le 1\}.   \ \ \ \ \   \ \ \ \ \   \ \ \ \ \   \ \ \ \ \   \ \ \ \ \   \ \ \ \ \  \ \ \ \ \   \ \ \ \ \   \ \ \ \ \  \ \ \ \ \  \ \ \ \ \  \ \ \ \ \
\end{eqnarray*}
Clearly, the feasible set of (EP2) is not compact.  Let $f_1(x)=x_1^4-\frac{1}{2}x_2^4+x_3^4-1$. Then, the corresponding sum-of-squares relaxation is given by
\begin{eqnarray*}
 (REP2) \ \ \ \ \   \ \ \ \   & \displaystyle \max_{\lambda \ge 0, \mu \in \mathbb{R}}\{\mu: f_0+\lambda f_1-\mu=\sigma_0, \, \sigma_0 \mbox{ is SOS and } {\rm deg} \sigma_0 \le 4 \}   \ \ \ \ \   \ \ \ \ \   \ \ \ \ \   \ \ \ \ \   \ \ \ \ \   \ \ \ \ \  \ \ \ \ \   \ \ \ \ \   \ \ \ \ \  \ \ \ \ \  \ \ \ \ \  \ \ \ \ \ 
\end{eqnarray*}
Solving the sum of squares programming problem (REP2) via YALMIP $($see {\rm [52,53]}$)$ gives us that $\min(REP2)=-1.2795$. 

On the other hand, direct calculation shows that for any global minimizer of (EP2) satisfies the following KKT condition: there exist $\lambda \ge 0$ and
 $(x_1,x_2,x_3)$ with $x_1^4-\frac{1}{2}x_2^4+x_3^4 \le 1$ such that
\begin{eqnarray*}
\left\{\begin{array}{lcc}
x_1^3-x_3^3+\lambda x_1^3 &=& 0, \\
x_2^3-\frac{\lambda}{2} x_2^3 &= & 0, \\
x_3^3-3x_1x_3^2 + \lambda x_3^3 & = & 0.
\end{array} \right.
\end{eqnarray*}
Solving this homogeneous polynomial equality system gives us that $\lambda=2$ or $\lambda=\sqrt[4]{27}-1$ and the possible KKT points are  $$\{(0,x_2,0): x_2 \in \mathbb{R}\} \cup \{(x_1,0,x_3): x_3=\sqrt[4]{3} \ x_1, \, |x_1| \le \sqrt[4]{\frac{1}{4}}\}.$$
 By comparing the corresponding objective function values of the KKT points, it can be verified that the optimal value of (EP2) is $1-\sqrt[4]{27} \approx -1.2795.$ This verifies that the sum-of-squares relaxation is exact.
\end{example}

\section{Perspectives}
Alternative theorems for arbitrary finite systems of linear or convex inequalities
have played key roles in the development of optimality conditions for continuous optimization
problems. Although these theorems are generally not valid for an
arbitrary finite system of (possibly nonconvex) quadratic inequalities, recent research has established alternative
theorems for quadratic systems involving two inequalities or arbitrary inequalities involving suitable sign structure. For
instance, a theorem of the alternative of Gordan type for a strict inequality system of two
homogeneous quadratic functions has been given in [1], where it was used in convergence
analysis of trust-region algorithms. This Theorem is often referred as Yuan's theorem of the alternative and has closed connection with
 the convexity of joint-range of homogeneous quadratic functions even though the functions may be non-convex [3,4].

On the other hand, tensor computation and optimization problems involving polynomials arise in a wide variety of contexts, including operational research, statistics, probability, finance, computer science, structural engineering, statistical physics,  computational biology and  graph theory [33,37,38]. They are however extremely challenging to solve, both in theory and practice.
A fascinating feature of this field is that it can be approached from several different directions. In addition to traditional techniques drawn from operational research, computer science and numerical analysis, new techniques have recently emerged based on concepts taken from algebraic geometry, moment theory, multilinear algebra and modern convex programming (semidefinite programming).

Due to the wide application of  tensor computation and polynomial optimization, an important research topic is to obtain a tractable extension of alternative
theorem for system of homogeneous polynomial inequalities (or equivalently inequalities involving tensors). Obtaining such a multilinear (or tensor) version of an theorem of the alternative is extremely useful as it naturally
leads to numerically checkable conditions for a global minimizer of a related polynomial optimization problem. Unfortunately, in general, this is an extreme challenging task. Two of the main
obstructions are (1) some of the nice geometric structure (such as joint-range convexity) for homogeneous quadratic functions cannot be carried forward to polynomial cases, and are more challenging to
exploit (2) unlike the quadratic cases, checking the nonnegativity of a homogeneous polynomial (or equivalently the positive semi-definiteness of a symmetric tensor) is, in general, an NP-hard problem [13,37,38].

In this paper, we provided a tractable extension of Yuan's theorem of the alternative in the symmetric tensor setting.
We achieve this by exploiting two important features of a special class of tensors (called essentially non-positive tensors): hidden convexity and numerical checkability.
As
an immediate application, we showed that the optimal value and optimal solution of a nonconvex polynomial optimization problem with essentially nonpositive coefficients can be found by a related convex conic programming problem. We also established that this class of polynomial optimization problem enjoys exact sum-of-squares relaxation.

Our results point out some useful observations and interesting further research topics. In particular, although a tensor problem (or a polynomial optimization problem)
is, in general NP-hard, we feel that it is important to exploit the special structure of the underlying problem and  push the boundary of the tractable classes of
problems. This is of particular importance because (1) those tractable classes are the problems we can efficiently solve via the current software/technology; (2) many of the practical problems often come with some special structures (such as sign structure and sparse structure)
naturally. The results presented in this paper suggest that problems involving tensors/polynomials with suitable sign structure would be a good candidate for the tractable
classes. In fact, this is not a coincidence as it was shown recently that almost the whole Perron-Frobenius theory for non-negative matrices can be extended to
tensor setting, and so, the extreme eigenvalue problem involving tensors with non-negative entries is numerically tractable [25,29,35,36]. On the other hand,
 this paper is still a preliminary study for structured tensors (or polynomial optimization with special structures) and a lot of interesting research topics need further investigation. Below, we list some
 of the topics which are particularly important from our point of view:

 \begin{itemize}
  \item[(a)]  Can one extend the results presented in this paper to a special structured tensor other than the tensors with essentially non-positive entries? Some particularly
  important structured tensors arise naturally in signal processing, stochastic process and data fitting include the Hankel tensors and circulant tensors [54-56]. Can theorem of the alternatives be extended
  to cover these structured tensors?
  \item[(b)]  As discussed in Example \ref{ex:1}, our exact relaxation result can fail for a nonconvex polynomial optimization problems if the functions involved do not have essentially nonpositive coefficients. On the other hand, it would be of interest to see how our results can be used to provide some approximate bounds for the optimal value of the general nonconvex polynomial optimization problems.

\item[(c)]  Finally, it would be also useful to extend the known theorem of the alternative for copositive matrix to the symmetric tensor setting (if possible).
 \end{itemize}
These will be our future research directions.
\medskip

\section{Conclusion}
In this paper, by exploiting the hidden convexity and numerical checkability of a special class of tensors, we established a tractable extension of Yuan's theorem of alternative in the symmetric tensor setting. As an application, we showed that the solution of a polynomial optimization
problem with suitable structure can be found by solving a single semi-definite programming problem.

\bigskip

\noindent{{\bf Acknowledgement:} The authors would like to express their sincere thanks the referees for their constructive comments and valuable suggestions, which have contributed to the revision of this paper. Moreover, the second author would like to thank Prof. J.B. Lasserre and Prof. T.S. Pham for pointing out the related references [50,51] during their visit in UNSW.

\bigskip 

\noindent Research was partially supported by the Australian Research Council Future Fellowship (FT130100038) and the Hong Kong Research Grant Council (Grant No. PolyU
502510, 502111, 501212 and 501913), and National Natural Science
Foundation of China (Grant No. 11101303).

\noindent \section*{References}

\begin{enumerate}[1.]

\item 
Yuan, Y.X.: On a subproblem of trust region algorithms for constrained optimization, {Math. Prog.}, 47, 53-63,  (1990).

\item 
Yan, Z.Z., Guo, J.H.:  Some equivalent results with Yakubovich's S-lemma. {SIAM J. Control Optim.} 48, no. 7, 4474-4480,  (2010).

  \item 
Jeyakumar,   V. , Huy  H.Q., Li, G.: Necessary and sufficient conditions for S-lemma and nonconvex quadratic optimization, {Optim.  Eng.},
10, 491-503,  (2009).

 \item 
P\'{o}lik,  I., Terlaky, T.,  A survey of the S-Lemma,   {SIAM Review}, {\bf 49}, 371-418,  (2007).

 \item 
Sturm  J. F., Zhang, S. Z.:  On cones of
 non-negative quadratic functions,     {Math. Oper. Res.}, 28, 246-267,  (2003).

\item 
Yakubovich, V. A.: S-Procedure in nonlinear control theory, {Vestnik Leningrad. Univ.}, 1,  62-77,  (1971).

\item 
Chen X., Yuan, Y.: A note on quadratic forms, {Math. Program.}, 86, 187-197, (1999).

\item 
Polyak, B.T.:  Convexity of quadratic transformation and its use in control and optimization,
{J. Optim. Theory Appl.}, {99}, 563-583,  (1998).

\item 
Crouzeix, J.P.,  Martinez-Legaz, J.E., Seeger, A.:  An theorem of the alternative for quadratic forms and extensions. {Linear Algebra Appl.} 215 , 121-134, (1995).

\item 
Mart\'{i}nez-Legaz, J. E., Seeger, A.: Yuan's theorem of the alternative and the maximization of the minimum eigenvalue function.  {J. Optim. Theory Appl.}  {82},  no. 1, 159-167,  (1994).

\item 
Jeyakumar, V., Lee, G.M., Li, G.:  Alternative theorems for quadratic inequality systems and global quadratic optimization, {SIAM J.  Optim.}, 20, no. 2, 983-1001,  (2009).

 \item 
Lim,  L.H.:  Singular values and eigenvalues of tensors, A variational approach,
{Proc. 1st IEEE International workshop on computational advances of multi-tensor adaptive processing}, 129-132,  (2005).

\item 
Qi, L.: Eigenvalues of a real symmetric tensor,
{J. Symb. Comp.,} 40, 1302-1324, (2005).

\item   Bomze, I.M., Ling, C., Qi, L., Zhang, X.: Standard bi-quadratic
optimization problems and unconstrained polynomial reformulations,
{J. Glob. Optim.},  52, 663-687, (2012).

\item 
He, S., Li Z., Zhang, S.: Approximation algorithms for homogeneous polynomial optimization with quadratic
constraints, {Math. Prog.} 125, 325-383,  (2010).

\item
Zhang, X.,  Qi,  L., Ye, Y.:  The cubic spherical optimization
problems, {Math. Comp.} 81, 1513-1525,  (2012).

\item 
Ling, C., Nie,  J., Qi   L., Ye, Y.: Bi-quadratic optimization over
unit spheres and semidefinite programming relaxations, {SIAM
J. Optim.} 20, 1286-1310, (2009).

\item 
 So, A. M-C.: Deterministic approximation algorithms for sphere constrained homogeneous polynomial optimization
problems, {Math. Prog.} 129, 357-382,  (2011).

\item Li, G., Mordukhovich, B.S., Pham, T.S.: New fractional error bounds for polynomial systems with applications to H\"{o}lderian stability in optimization and spectral theory of tensors, to appear in {Math. Prog.},
DOI: 10.1007/s10107-014-0806-9.

\item 
Qi, L., Xu, Y., Yuan, Y.,  Zhang, X.: A cone constrained convex
program: structure and algorithms, {J. Oper. Res. Society China}, 1, 37-53, (2013).

\item 
Qi L., Ye, Y.: Space tensor conic programming, to appear in: {Comp. Optim. Appl.}, (2013).

\item Cooper, J., Dutle, A.: Spectral of hypergraphs, {Linear Algebra Appl.}, 436, 3268-3292,  (2012).

\item 
Hu, S., Qi, L.:  Algebraic connectivity of an even uniform
hypergraph, {J. Comb. Optim.}, 24, 564-579,  (2012).

\item 
Li, G., Qi L., Yu, G.: The Z-eigenvalues of a symmetric tensor and its application to spectral hypergraph theory, {Num. Linear Algebra  Appl.}, 20, no. 6, 1001-1029, (2013).

\item 
Qi, L.:  H$^+$-eigenvalues of Laplacian and signless
Laplacian tensors, {Comm.
Math. Sci.}, 12, 1045-1064,  (2014).






\item 
Ng, M., Qi L., Zhou, G.:  Finding the largest eigenvalue of a non-negative tensor,
{SIAM J. Matrix Anal. Appl.}, 31, 1090-1099, (2009).

\item 
Lathauwer, L. De, Moor, B.: {From matrix to tensor: Multilinear algebra and signal processing}. In: J. McWhirter, Editor, Mathematics in Signal Processing IV, Selected papers presented at 4th IMA Int. Conf. on Mathematics in Signal Processing, Oxford University Press, Oxford, United Kingdom, 1-15,  (1998).

\item 
Qi, L., Teo, K.L.:  Multivariate polynomial minimization and its application in signal processing, {J. Global Optim.}, 46, 419-433, (2003).

\item 
Qi, L.,  Yu  G., Wu, E.X.:  Higher order positive semi-definite diffusion tensor
imaging,  {SIAM J. Imaging Sci.}, 3, 416-433,  (2010).

\item 
Chang, K.C.,  Pearson, K., Zhang, T.: Primitivity, the convergence of the NZQ
method, and the largest eigenvalue for non-negative tensors, {SIAM J. Matrix Anal. Appl.}, 32, 806-819, (2011).

\item 
Hu, S.,  Li, G.,  Qi,  L., Song, Y.: Finding the maximum eigenvalue of essentially non-negative symmetric tensors via sum of squares programming, {J. Optim. Theory Appl.}, 158, no. 3, 713-738,  (2013).

\item 
Kofidis E., Regalia, Ph.: On the best rank-1 approximation of higher-order symmetric tensors, {SIAM J. Matrix Anal. Appl.} 23, 863-884, (2002).

\item 
Kolda T.G., Bader, B.W.:  Tensor decompositions and applications.  {SIAM Review},  51,  no. 3, 455-500,   (2009).

\item 
Li, G., Qi L., Yu, G.:  Semismoothness of the maximum eigenvalue function of a symmetric tensor and its application,  {Linear Algebra Appl.},  438, 813-833, (2013).

\item 
Liu, Y., Zhou, G., Ibrahim, N.F.:  An always convergent algorithm for the largest eigenvalue of an irreducible non-negative tensor. {J. Comp. Applied Math.} 235, no. 1, 286-292,  (2010).

\item 
Zhang, L., Qi, L., Luo,  Z., Xu, Y.:  The dominant eigenvalue of an essentially non-negative tensor, {Num. Linear Algebra Appl.} 20, no. 6, 929-941,  (2013).

\item 
Yang, Y.N., Yang,  Q.Z.:  Further results for Perron-Frobenius theorem for non-negative tensors. {SIAM J. Matrix Anal. Appl.} 31, no. 5, 2517-2530,  (2010).

\item 
Lasserre, J.B.: {Moments, Positive Polynomials and their Applications}, Imperial College Press, (2009).

\item 
Laurent, M.:   {Sums of squares, moment matrices and optimization over polynomials}.  Emerging Applications of Algebraic Geometry, Vol. 149 of IMA Volumes in Mathematics and its Applications, M. Putinar and S. Sullivant (eds.), Springer, pages 157-270, (2009).

\item 
 Parrilo, P.A.: Semidefinite programming relaxations for semialgebraic problems.
 {Math. Prog.} Ser. B,  96, no.2, 293-320, (2003).

\item 
Hilbert, D.: \"{U}ber die Darstellung definiter Formen als Summe von Formenquadraten, {Math. Annalen}, 32, 342-350,  (1888).

 \item 
Reznick, B.:   Some concrete aspects of Hilbert's 17th Problem.  {Real algebraic geometry and ordered structures} (Baton Rouge, LA, 1996),   Contemp. Math., 253, Amer. Math. Soc., Providence, RI, 251-272, (2000).

\item 
Reznick, B.:   Sums of Even Powers of Real Linear Forms,
{Memoirs American Math. Society}, 96, no.
463, (1992).





\item 
Fidalgo, C., Kovacec, A.: Positive semidefinite diagonal minus tail forms are sums of squares,
{Mathe. Zeit.}
269, 629-645,  (2011).

\item 
Zalinescu, C.:  {Convex Analysis in General Vector
Spaces}, World Scientific, (2002).

\item 

Friedgut, E.: Hypergraphs, entropy, and inequalities.
{Amer. Math. Monthly} 111, 749-760,  (2004).

\item 
Lasserre,  J.B.: Global optimization with polynomials and the problem of
moments, {SIAM J. Optim.},  { 11}, 796-817,  (2001).

\item 
H\'{a}, H.V., Pham, T.S.:  Representations of positive polynomials and optimization on noncompact semialgebraic sets. {SIAM J. Optim.} 20, no. 6, 3082-3103, (2010).

\item 
Nie, J.W., Demmel, J., Sturmfels, B.: Minimizing polynomials via sum of squares over the gradient ideal. {Math. Prog.},  Ser. A, 106, no. 3, 587-606,  (2006).





\item 
Schweighofer, M.:
Global optimization of polynomials using gradient tentacles and sums of squares,
{SIAM J. Optim.} 17, no. 3, 920-942, (2006).

\item 
Ghasemi, M., Lasserre,  J. B., Marshall,  M.: Lower bounds on the global minimum of a polynomial, arXiv:1209.3049.

\item 
Ghasemi, M., J. B., Marshall,  M.:Lower bounds for polynomials using geometric programming,
{SIAM J. Optim.} 22, 460-473, (2012).

\item 
 L\"{o}fberg, J.: Pre- and post-processing sum-of-squares programs in practice, {IEEE Tran. Auto. Control}, 54, 1007-1011,  (2009).

\item 
L\"{o}fberg, J.: YALMIP: A Toolbox for Modeling and Optimization in MATLAB.  In Proceedings of the CACSD Conference, Taipei, Taiwan, (2004).

\item
Papy, J.M.,De Lathauwer, L., Van Huffel, S.:  Exponential data fitting using multilinear algebra: The single-channel and multi-channel case,
{Num. Linear Algebra Appl.}, 12, 809-826, (2005).

\item Ding,  W., Qi L.,  Wei, Y.: Fast Hankel tensor-vector products and application to exponential data fitting, January 2014. arXiv: 1401.6238.

\item Chen Z., Qi, L.: Circulant tensors with applications to spectral hypergraph theory and stochastic process, April 2014.   arXiv:1312.2752.
\end{enumerate}

\vspace{-0.6cm}
\section*{Appendix}

Proof of Proposition \ref{prop:0.1}

\begin{proof}
As any sum-of-squares polynomial takes non-negative value, ${\rm SOS}_{m,n} \cap E_{m,n} \subseteq {\rm PSD}_{m,n} \cap E_{m,n}$ always holds. We only need to show the converse inclusion. To establish this, let $\mathcal{A} \in {\rm PSD}_{m,n} \cap E_{m,n}$ and consider
the associated homogeneous polynomial
\[
f(x)=\langle \mathcal{A}, x^{\otimes m}\rangle=\sum_{i_1,\ldots,i_m=1}^{n} \mathcal{A}_{i_1\cdots
i_m}x_{i_1}\cdots x_{i_m}.
\]
Then, $f$ is a polynomial which takes non-negative value.
Note that
\[
f(x)=\sum_{i_1,\ldots,i_m=1}^{n} \mathcal{A}_{i_1\cdots
i_m}x_{i_1}\cdots x_{i_m}=\sum_{i=1}^n(\mathcal{A}_{ii\cdots i})x_i^{m}+\sum_{(i_1,\ldots,i_m) \notin I}(\mathcal{A}_{i_1\cdots
i_m})x_{i_1} \cdots x_{i_m},
\]
where $I:=\{(i,i,\ldots,i) \in \mathbb{N}^m: 1 \le i \le n\}.$
As $\mathcal{A}$ is essentially nonpositive, $\mathcal{A}_{i_1i_2\cdots
i_m} \le 0$ for all $(i_1,\ldots,i_m) \notin I$. Now, let $f(x)=\sum_{i=1}^n f_{m,i} x_i^{m}+\sum_{\alpha \in \Omega_f}f_{\alpha}x^{\alpha}$. Then, $f_{m,i}=\mathcal{A}_{ii\cdots i}$
 and $f_{\alpha} < 0$ for all $\alpha \in \Omega_f$ where $\Omega_f=\{\alpha=(\alpha_1,\ldots,\alpha_n) \in (\mathbb{N}\cup \{0\})^n: f_{\alpha} \neq 0 \mbox{ and } \alpha \neq m e_i, \ i=1,\ldots,n\},$ and $e_i$ is the vector where its $i$th component is one and all the other components are zero. Recall that
$\Delta_f  =  \{\alpha=(\alpha_1,\ldots,\alpha_n) \in \Omega_f: f_{\alpha} < 0 \mbox{ or } \alpha \notin (2\mathbb{N}\cup \{0\})^n\}.$
 Note that $f_{\alpha}<0$ for all $\alpha \in \Omega_f$ and so, $\Delta_f=\Omega_f$.  It follows that
\begin{eqnarray*}
\hat{f}(x)&:= &\sum_{i=1}^n f_{m,i} x_i^{m}-\sum_{\alpha \in \Delta_f}|f_{\alpha}|x^{\alpha}\\
&=& \sum_{i=1}^n f_{m,i} x_i^{m}+\sum_{\alpha \in \Delta_f}f_{\alpha}x^{\alpha} \\
&= &  \sum_{i=1}^n f_{m,i} x_i^{m}+\sum_{\alpha \in \Omega_f}f_{\alpha}x^{\alpha} = f(x).
\end{eqnarray*}
So, $\hat{f}$ is also a polynomial which takes non-negative value. Thus the conclusion follows by Lemma \ref{lemma:2.1}.
\end{proof}
\end{document}